
\documentclass[12pt]{amsart}
\textwidth=34cc
\baselineskip 16pt
\textheight 620pt
\footskip 40pt
\parskip 0pt
\oddsidemargin 10pt
\evensidemargin 10pt


\usepackage[latin1]{inputenc}
\usepackage[english]{babel}

\usepackage{indentfirst}
\usepackage{amssymb}
\usepackage{amsthm}



\newcommand{\FF}{\protect{\mathcal F}}
\newcommand{\DD}{\protect{\mathcal D}}

\newcommand{\vf}{\varphi}
\newcommand{\sm}{\setminus}
\newcommand{\sub}{\subseteq}


\def\dfrac#1#2{{\displaystyle{#1\over#2}}}

\newcommand\eps{\ensuremath{\varepsilon}}

\newcommand{\Ba}{{\rm Ba}}



\newcommand{\lin}{\protect{\rm span}}
\newcommand{\alg}{\mathfrak A}
\newcommand{\algb}{\mathfrak B}
\newcommand{\algc}{\mathfrak C}



\newcommand{\UU}{{\mathcal U}}

\newcommand{\NN}{{\mathcal N}}
\newcommand{\PP}{{\mathcal P}}

\newcommand{\btu}{\bigtriangleup}

\newcommand{\er}{\mathbb R}

\newcommand{\ult}{{\rm ULT}}

\newcommand{\clop}{\protect{\rm Clop} }

\newtheorem{theo}{Theorem}[section]
\newtheorem{lem}[theo]{Lemma}
\newtheorem{cor}[theo]{Corollary}


\newtheorem{prop}[theo]{Proposition}

\newtheorem{lemma}[theo]{Lemma}

\newtheorem{problem}[theo]{Problem}

\theoremstyle{definition}
\newtheorem{definition}[theo]{Definition}

\theoremstyle{remark}
\newtheorem{remark}[theo]{Remark}
\numberwithin{equation}{section}



\def\epsilon{\varepsilon}



\providecommand{\MR}{\relax\ifhmode\unskip\space\fi MR }

\providecommand{\href}[2]{#2}

\title{On Baire measurability in spaces of continuous functions}

\author{ A. Avil\'{e}s}
\address{Departamento de Matem\'{a}ticas\\
Facultad de Matem\'{a}ticas\\ Universidad de Murcia\\ 30100 Espinardo (Murcia)\\
Spain} \email{avileslo@um.es}

\author{G. Plebanek}
\address{Instytut Matematyczny\\ Uniwersytet Wroc\l awski\\ Wroc\l aw\\ Poland} \email{grzes@math.uni.wroc.pl}

\author{J. Rodr\'{i}guez}
\address{Departamento de Matem\'{a}tica Aplicada\\
Facultad de Inform\'{a}tica\\ Universidad de Murcia\\ 30100 Espinardo (Murcia)\\
Spain} \email{joserr@um.es}

\subjclass[2010]{28E15, 46E15, 46E27, 54G20}

\keywords{Baire $\sigma$-algebra; spaces of continuous functions; spaces of measures; sequential closure;
uniformly distributed sequence}

\thanks{A.~Avil\'{e}s and J.~Rodr\'{i}guez were supported by MEC (Project MTM2011-25377). 
A. Avil\'{e}s was supported by {\em Ramon y Cajal} contract (RYC-2008-02051).
G. Plebanek was supported by MNiSW Grant N N201 418939 (2010--2013)}

\begin{document}
\begin{abstract}
Let $C(K)$ be the Banach space of all continuous functions on a given compact space~$K$. 
We investigate the $w^*$-sequential closure in~$C(K)^*$ of the set of all finitely supported probabilities on~$K$. We
discuss the coincidence of the Baire $\sigma$-algebras on~$C(K)$ associated to the weak and pointwise convergence topologies. 
\end{abstract}

\maketitle

\section{Introduction}

We denote by~$\omega$ the set of all natural numbers $\{0,1,2,\dots\}$. 
Any $n \in \omega$ is often regarded as the set $\{0,1,\dots,n-1\}$.

Let $K$ be a compact space (all our topological spaces are Hausdorff), let $C(K)$ be
the Banach space of all continuous real-valued functions on~$K$ and let
$M(K)=C(K)^*$ be the space of all Radon (signed) measures on~$K$. Throughout the paper $M(K)$ 
is equipped with the weak$^*$ topology (denoted by $w^*$ for short) unless otherwise stated.
We denote by $M^+(K)$ (resp. $P(K)$) the subset of~$M(K)$ made up of all
Radon non-negative (resp. probability) measures on~$K$. For every $t\in K$ 
we denote by $\delta_t\in P(K)$ the Dirac measure at~$t$. We shall write 
${\rm co}\Delta_K$ for the convex hull 
of the set $\Delta_K := \{\delta_t: \, t\in K\}$ in $M(K)$.
Given a set $A\sub M(K)$, we denote by $Seq(A)$ the sequential closure of $A$ in~$M(K)$, 
that is, the smallest subset of $M(K)$ that contains~$A$ and is closed under limits of $w^*$-convergent sequences. 
The sequential closure is obtained by a transfinite procedure as follows. Define $Seq^0(A):=A$, and
let $Seq^{\alpha+1}(A)$ be the set of all limits of $w^*$-convergent sequences in 
$Seq^{\alpha}(A)$, and let $Seq^\alpha(A) := \bigcup_{\beta<\alpha}Seq^\beta(A)$ whenever~$\alpha$ is a limit ordinal. 
Then $Seq(A) = Seq^{\omega_1}(A)$, where $\omega_1$ stands for the first uncountable ordinal. 

The set ${\rm co}\Delta_K$ is $w^*$-dense in $P(K)$ (just apply the Hahn-Banach theorem).  
For an arbitrary $\mu\in P(K)$, a classical result (see~\cite{nie}) states that 
$\mu \in Seq^1({\rm co}\Delta_K)$ if and only if $\mu$ admits a {\em uniformly distributed sequence}, i.e. a 
sequence~$\{t_n\}_{n\in \omega}$ in~$K$ such that 
$\{\frac{1}{n}\sum_{i<n} \delta_{t_i}\}_{n\in \omega}$ is $w^*$-convergent to~$\mu$.
There is a number of well-studied classes of compact spaces $K$ on which
{\em every} Radon probability measure admits a uniformly distributed sequence or, equivalently,
the equality
\begin{equation}\label{equation:Seq1}
	Seq^1({\rm co}\Delta_K)=P(K)
\end{equation}
holds true. Indeed, $K$ has such a property whenever it is
metrizable, Eberlein, Rosenthal, Radon-Nikod\'{y}m or
a totally ordered compact line (see \cite{mer-J} and the references therein). The space
$K=2^\mathfrak{c}$ enjoys that property as well \cite[491Q]{freMT-4}, where $\mathfrak{c}$ stands for the cardinality of the continuum.
The Stone space $K$ of a minimally generated Boolean algebra satisfies $Seq({\rm co}\Delta_K) = P(K)$ (see~\cite{bor-ple}) and, 
in fact, this result can be strengthen to saying that equality~\eqref{equation:Seq1} holds.

Under the Continuum Hypothesis, we present in Section~\ref{ex1} a construction of a compact $0$-dimensional space $K$ such that
$$ 
	Seq^1({\rm co}\Delta_K) \neq Seq({\rm co}\Delta_K) = P(K)
$$
(see Theorem~\ref{ex1:9}). Our example has some features of an $L$-space
constructed in~\cite{Ku81} and related constructions given in~\cite{Pl97}. 
In fact, the compact space~$K$ of~Theorem~\ref{ex1:9} satisfies 
$Seq^1({\rm co}\Delta_K) \neq Seq^2({\rm co}\Delta_K)$ and $Seq^3({\rm co}\Delta_K) = P(K)$.
Along this way, it was recently proven in~\cite{bor-sel} (without additional set-theoretic assumptions) that for every ordinal $1\leq \alpha<\omega_1$ there is
a compact space~$K^{(\alpha)}$ such that 
$$
	Seq^\alpha({\rm co}\Delta_{K^{(\alpha)}})\sm \bigcup_{\beta<\alpha}Seq^\beta({\rm co}\Delta_{K^{(\alpha)}})\neq \emptyset
$$
and $Seq^{\alpha+1}({\rm co}\Delta_{K^{(\alpha)}})=Seq({\rm co}\Delta_{K^{(\alpha)}})\neq P(K^{(\alpha)})$.

Our interest on these questions is somehow motivated by their connection with the study of
Baire measurability in the space $C(K)$. Namely, 
if $C_p(K)$ (resp. $C_w(K)$) stands for $C(K)$ equipped with the pointwise convergence (resp. weak) topology, then
the corresponding Baire $\sigma$-algebras satisfy
$$
	\Ba(C_p(K)) \sub \Ba(C_w(K)).
$$
It is well-known (see~\cite{edg-J}) that $\Ba(C_p(K))$ is generated by $\Delta_K$,
while $\Ba(C_w(K))$ is generated by $P(K)$. Thus, the equality
\begin{equation}\label{equation:coincidence}
	\Ba(C_p(K)) = \Ba(C_w(K))
\end{equation}
holds true whenever $Seq({\rm co}\Delta_K) = P(K)$, and this is the case for many spaces
as we pointed out above. The compact space of Theorem~\ref{ex1:9} makes clear
that equalities \eqref{equation:Seq1} and~\eqref{equation:coincidence} are not equivalent. In Section~\ref{section:betaN}
we pay further attention to~\eqref{equation:coincidence} and 
show that it fails for $K=\beta\omega$ and $K=\beta\omega\sm\omega$ (Theorem~\ref{neq_betaN} and
Corollary~\ref{neq_betaNsmN}). Some related open problems are posed at the end of the paper.

\subsection*{Terminology}

We write $\mathcal{P}(S)$ to denote the power set of any set~$S$.
Given a Boolean algebra~$\alg$, by a `measure' on~$\alg$ we mean a 
bounded finitely additive measure.
The Stone space of all ultrafilters on~$\alg$ is denoted by~$\ult(\alg)$. Recall that the Stone isomorphism
between $\alg$ and the algebra $\clop(\ult(\alg))$ of clopen subsets of $\ult(\alg)$ is given by
$$
	\alg \to \clop(\ult(\alg)), \quad A \mapsto \widehat{A}=\{\FF\in\ult(\alg): \, A\in\FF\}.
$$
Every measure $\mu$ on~$\alg$ induces a measure $\widehat{A} \mapsto \mu(A)$ on $\clop(\ult(\alg))$ which can be uniquely 
extended to a Radon measure on~$\ult(\alg)$ (see e.g. \cite[Chapter~5]{semadeni}); such Radon measure is still denoted by the same letter~$\mu$.
We shall need the following useful fact about extensions of measures. 

\begin{lem}[\cite{Lipecki74,Plachky76}]\label{extensions_of_measures}
Let $\algc\supseteq \algb$ be Boolean algebras and let $\mu$ be a measure on~$\algb$. Then
$\mu$ can be extended to a measure $\nu$ on~$\algc$ such that
$\inf \{\nu(C\btu B): B\in\algb\}=0$ for every $C\in\algc$.
\end{lem}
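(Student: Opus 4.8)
The plan is to build $\nu$ by adjoining generators of $\algc$ to $\algb$ one at a time, at each step assigning to the new generator the \emph{largest} value allowed by the inner/outer $\algb$-measure interval; this is exactly what makes the new generator approximable from $\algb$, and the approximation property then propagates through finite Boolean combinations. I would organize this transfinite construction via Zorn's lemma, and treat the case $\mu\ge 0$ (the one relevant here).

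First I would consider the poset $P$ of all pairs $(\alg,\nu)$ with $\algb\subseteq\alg\subseteq\algc$ a subalgebra and $\nu$ a non-negative measure on $\alg$ that extends $\mu$ and satisfies $\inf\{\nu(A\btu B):B\in\algb\}=0$ for every $A\in\alg$, ordered by extension. Then $(\algb,\mu)\in P$, and every chain in $P$ has an upper bound: take the union of the algebras and of the measures; since each element of the union already belongs to some member of the chain, the approximation condition is inherited. Zorn's lemma then yields a maximal $(\alg^*,\nu^*)\in P$, and it suffices to show $\alg^*=\algc$.

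Suppose $\alg^*\ne\algc$ and fix $C_0\in\algc\sm\alg^*$; let $\alg^{**}$ be the subalgebra generated by $\alg^*\cup\{C_0\}$, so each of its members is of the form $(A_1\cap C_0)\cup(A_2\sm C_0)$ with $A_1,A_2\in\alg^*$. Put $t:=\inf\{\nu^*(A):A\in\alg^*,\ A\supseteq C_0\}$; since $\sup\{\nu^*(A'):A'\in\alg^*,\ A'\subseteq C_0\}\le t$, the classical one-step extension theorem for finitely additive measures (\L o\'s and Marczewski) provides a non-negative measure $\nu^{**}$ on $\alg^{**}$ extending $\nu^*$ with $\nu^{**}(C_0)=t$. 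With this choice, $C_0$ is approximable by $\alg^*$: for $\eps>0$, any $A\in\alg^*$ with $C_0\subseteq A$ and $\nu^*(A)<t+\eps$ gives $\nu^{**}(C_0\btu A)=\nu^{**}(A)-\nu^{**}(C_0)=\nu^*(A)-t<\eps$. I would then set $d(X,Y):=\nu^{**}(X\btu Y)$, which is a pseudometric on $\alg^{**}$ with $d(X_1\cap X_2,Y_1\cap Y_2)\le d(X_1,Y_1)+d(X_2,Y_2)$ (and the same for $\cup$) and $d(X^c,Y^c)=d(X,Y)$. Since $(\alg^*,\nu^*)\in P$ and $\nu^{**}$ extends $\nu^*$, every element of $\alg^*$ is $d$-approximable by $\algb$, hence so is $C_0$ by the triangle inequality; and for a general $E=(A_1\cap C_0)\cup(A_2\sm C_0)\in\alg^{**}$, choosing $B_0,B_1,B_2\in\algb$ close (in $d$) to $C_0,A_1,A_2$ and setting $B:=(B_1\cap B_0)\cup(B_2\sm B_0)\in\algb$ gives $d(E,B)\le d(A_1,B_1)+d(A_2,B_2)+2\,d(C_0,B_0)$, which is arbitrarily small. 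So $(\alg^{**},\nu^{**})\in P$, contradicting maximality; therefore $\alg^*=\algc$ and $\nu:=\nu^*$ is the required extension.

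The only external ingredient is the one-step extension theorem — that a non-negative finitely additive measure on a subalgebra $\alg$ extends to $\alg(C_0)$ with the value on $C_0$ prescribed anywhere between the inner and outer $\alg$-measures of $C_0$ — which is classical and carries whatever analytic content the proof has; the rest is bookkeeping. The one point that requires an idea rather than a computation is the decision to take the outer-measure endpoint $t$ at every step: for any strictly smaller value the newly adjoined generator would in general fail to be $\algb$-approximable, and the inductive construction would break down.
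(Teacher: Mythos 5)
Your argument is correct for non-negative $\mu$, which is indeed the only case the paper uses (the lemma is invoked once, to extend a probability measure from an algebra $\algb\subseteq\mathcal{P}(\omega)$ to all of $\mathcal{P}(\omega)$). Note, however, that the paper does not prove this lemma at all: it is quoted from Lipecki and Plachky, and the argument in those references is genuinely different from yours. There one observes that the set of all extensions of $\mu$ to $\algc$ is a nonempty convex set, compact in the topology of pointwise convergence on $\algc$ (nonemptiness being the \L o\'s--Marczewski/Horn--Tarski theorem), so by Krein--Milman it has an extreme point; Plachky's theorem then identifies the extreme extensions as exactly those $\nu$ satisfying $\inf\{\nu(C\btu B):B\in\algb\}=0$ for all $C\in\algc$. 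Your transfinite construction trades that functional-analytic detour for a direct Zorn's lemma argument: it is more elementary and self-contained (the only imported fact is the one-step extension theorem, which you would need for nonemptiness in the Krein--Milman route anyway), at the cost of not yielding the extreme-point characterization, which is the real content of the cited papers. The body of your proof is sound: the chain/upper-bound step, the computation $\nu^{**}(C_0\btu A)=\nu^*(A)-t$ for $A\supseteq C_0$, and the propagation of $\algb$-approximability through Boolean combinations via the subadditivity of $d(X,Y)=\nu^{**}(X\btu Y)$ are all fine. One small inaccuracy in your closing remark: the outer-measure endpoint is not the unique workable choice. The inner-measure endpoint $s=\sup\{\nu^*(A'):A'\in\alg^*,\ A'\subseteq C_0\}$ works just as well, by the symmetric computation $\nu^{**}(C_0\btu A')=\nu^{**}(C_0)-\nu^*(A')$ for $A'\subseteq C_0$; it is only the values strictly between the two endpoints that destroy approximability of $C_0$. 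Finally, be aware that the lemma as stated allows signed $\mu$; to cover that case one would run your argument on the Jordan decomposition, or replace $\nu(C\btu B)$ by $|\nu|(C\btu B)$ throughout.
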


\section{A compact space $K$ such that $Seq^1({\rm co}\Delta_K) \neq Seq({\rm co}\Delta_K) = P(K)$}\label{ex1}

For the sake of the construction we first note the following two lemmas.
We denote by $\lin\Delta_K$ the linear span of~$\Delta_K$ in~$M(K)$. 

\begin{lem}\label{seq_abs_cont2}
Let $K$ be a compact space and let $\mu\in Seq^{\alpha}(\lin \Delta_K)$ for some $\alpha<\omega_1$.
If $\vf\in C(K)$ and $\nu\in M(K)$ is defined by  
$$
	\nu(\Omega):=\int_\Omega \vf\, {\rm d}\mu \quad
	\mbox{for every Borel set }\Omega \sub K,
$$
then $\nu \in Seq^{\alpha}(\lin\Delta_K)$ as well. The same statement holds if $\lin\Delta_K$
is replaced by $M^+(K)\cap \lin\Delta_K$ and $\vf \geq 0$.
\end{lem}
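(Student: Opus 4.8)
The plan is to proceed by transfinite induction on $\alpha<\omega_1$, proving simultaneously the claim about $\lin\Delta_K$ and the positive variant. The key observation is that the operation $\mu \mapsto \varphi \cdot \mu$ (meaning the measure $\Omega \mapsto \int_\Omega \varphi \, {\rm d}\mu$) is $w^*$-to-$w^*$ continuous on $M(K)$, because for any $g \in C(K)$ we have $\int g \, {\rm d}(\varphi\cdot\mu) = \int g\varphi \, {\rm d}\mu$, and $g\varphi \in C(K)$; so $w^*$-convergence $\mu_n \to \mu$ gives $\varphi\cdot\mu_n \to \varphi\cdot\mu$. This continuity is what makes the induction go through at successor and limit stages.

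For the base case $\alpha=0$: if $\mu \in \lin\Delta_K$, say $\mu = \sum_{i<k} c_i \delta_{t_i}$, then $\varphi\cdot\mu = \sum_{i<k} c_i\varphi(t_i)\delta_{t_i} \in \lin\Delta_K$, which settles it; for the positive variant, if $\mu \in M^+(K)\cap\lin\Delta_K$ and $\varphi \geq 0$ then the coefficients $c_i\varphi(t_i)$ are non-negative, so $\varphi\cdot\mu \in M^+(K)\cap\lin\Delta_K$. For a successor step $\alpha = \beta+1$: if $\mu \in Seq^{\beta+1}(\lin\Delta_K)$, pick a sequence $\mu_n \in Seq^\beta(\lin\Delta_K)$ with $\mu_n \to \mu$ in $w^*$. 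By the inductive hypothesis each $\varphi\cdot\mu_n \in Seq^\beta(\lin\Delta_K)$, and by the continuity remark $\varphi\cdot\mu_n \to \varphi\cdot\mu$, so $\varphi\cdot\mu \in Seq^{\beta+1}(\lin\Delta_K)$; and the positive variant passes through verbatim. The limit case is immediate from the union definition of $Seq^\alpha$.

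I do not anticipate a serious obstacle here: the only mild subtlety is making sure $\varphi\cdot\mu$ is genuinely a Radon measure (it is, being an absolutely continuous modification of a Radon measure with bounded continuous density, so it is determined by its action on $C(K)$ exactly as stated), and bookkeeping the two statements together through the induction so that the positive case really does stay inside $M^+(K)$ when $\varphi\geq 0$ — which it does, since $\varphi\cdot\mu$ is a non-negative measure whenever $\mu\geq 0$ and $\varphi\geq 0$, and the $Seq^\alpha$ levels relative to $M^+(K)\cap\lin\Delta_K$ are built inside $M^+(K)$ (a $w^*$-closed set). The whole argument is a routine "continuous image of a sequential-closure hierarchy" computation once the continuity of multiplication by $\varphi$ is noted.
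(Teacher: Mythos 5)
Your proof is correct and takes essentially the same route as the paper's: a transfinite induction whose successor step rests on the $w^*$-(sequential) continuity of the map $\mu\mapsto\varphi\cdot\mu$, which is exactly the displayed computation $\int_K g\,{\rm d}\nu_n=\int_K g\varphi\,{\rm d}\mu_n\to\int_K g\varphi\,{\rm d}\mu$ in the paper. The only difference is that you spell out the base case $\alpha=0$ and the positivity bookkeeping explicitly, which the paper treats as obvious.
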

\begin{proof} We proceed by transfinite induction. The case $\alpha=0$ being obvious,
suppose that $1\leq \alpha < \omega_1$ and that the statement is valid for all ordinals $\beta<\alpha$. There is nothing to prove
if $\alpha$ is a limit ordinal, so assume that $\alpha=\xi+1$ for some $\xi<\omega_1$.
Fix a sequence $\{\mu_n\}_{n\in \omega}$ in $Seq^\xi(\lin\Delta_K)$ which is $w^*$-convergent to~$\mu$. 
For every $n\in \omega$ we define $\nu_n\in M(K)$ by 
$\nu_n(\Omega):=\int_\Omega \vf\, {\rm d}\mu_n$ for every Borel set $\Omega \sub K$,
so that $\nu_n\in Seq^\xi(\lin\Delta_K)$ by the inductive hypothesis.
Clearly, for every $g\in C(K)$ we have
$$
	\lim_n \int_K g \, {\rm d}\nu_n = \lim_ n\int_K  g\vf \,{\rm d}\mu_n = 
	\int_K  g\vf\, {\rm d}\mu=\int_K g\, {\rm d}\nu,
$$
that is, $\{\nu_n\}_{n\in \omega}$ is $w^*$-convergent to~$\nu$.
Thus $\nu\in Seq^{\xi+1}(\lin\Delta_K)$.
\end{proof}

\begin{lem}\label{seq_abs_cont}
Let $K$ be a compact space and let $\mu\in Seq^{\alpha}({\rm co \Delta_K})$ for some $\alpha<\omega_1$.
If $\nu\in M(K)$ is absolutely continuous with respect to~$\mu$, then 
$\nu\in Seq^{\alpha+1}(\lin\Delta_K)$. If in addition $\nu\in M^+(K)$, then $\nu\in Seq^{\alpha+1}(M^+(K)\cap \lin\Delta_K)$.
\end{lem}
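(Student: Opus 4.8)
The plan is to deduce Lemma~\ref{seq_abs_cont} from Lemma~\ref{seq_abs_cont2} by approximating an arbitrary Radon-Nikod\'ym derivative by continuous functions. Suppose $\nu \ll \mu$ with $\mu \in Seq^\alpha({\rm co}\Delta_K)$. By the Radon-Nikod\'ym theorem there is $h \in L^1(|\mu|)$ with $\nu(\Omega) = \int_\Omega h \,{\rm d}\mu$ for every Borel $\Omega \sub K$. Since $C(K)$ is dense in $L^1(|\mu|)$ (Radon measures on compact spaces are regular), I would pick $\vf_n \in C(K)$ with $\|h-\vf_n\|_{L^1(|\mu|)} \to 0$. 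For each $n$ define $\nu_n \in M(K)$ by $\nu_n(\Omega) := \int_\Omega \vf_n \,{\rm d}\mu$; then $\nu_n \in Seq^\alpha(\lin\Delta_K)$ by Lemma~\ref{seq_abs_cont2} (note ${\rm co}\Delta_K \sub \lin\Delta_K$, so $\mu \in Seq^\alpha(\lin\Delta_K)$).

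Next I would check that $\{\nu_n\}_{n\in\omega}$ is $w^*$-convergent to~$\nu$. Indeed, for any $g \in C(K)$,
$$
	\Bigl| \int_K g \,{\rm d}\nu - \int_K g\,{\rm d}\nu_n \Bigr| = \Bigl| \int_K g(h-\vf_n)\,{\rm d}\mu \Bigr| \leq \|g\|_\infty \int_K |h-\vf_n|\,{\rm d}|\mu| \to 0.
$$
Hence $\nu$ is a $w^*$-limit of a sequence in $Seq^\alpha(\lin\Delta_K)$, so $\nu \in Seq^{\alpha+1}(\lin\Delta_K)$, as claimed.

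For the positive case, assume $\nu \in M^+(K)$. Then $\mu^+ \ll |\mu|$ has a density, and one can arrange $h \geq 0$ $|\mu|$-a.e.; more directly, working with the measure $|\mu| \in Seq^\alpha({\rm co}\Delta_K)$ if necessary and replacing $\vf_n$ by $\max(\vf_n,0) \in C(K)$ only decreases the $L^1(|\mu|)$-distance to $h \geq 0$, so we may assume each $\vf_n \geq 0$. Then $\nu_n = \vf_n \cdot \mu \in M^+(K)$ when $\mu \geq 0$, and the second assertion of Lemma~\ref{seq_abs_cont2} gives $\nu_n \in Seq^\alpha(M^+(K)\cap\lin\Delta_K)$; passing to the limit yields $\nu \in Seq^{\alpha+1}(M^+(K)\cap\lin\Delta_K)$.

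The only delicate point is the sign bookkeeping: a general $\mu \in {\rm co}\Delta_K$ is already non-negative, so in the positive case $\mu \geq 0$ and $\nu \ll \mu$ with $\nu \geq 0$ forces the density $h$ to be non-negative $\mu$-a.e., and truncating the approximants at $0$ keeps them in $C(K)$ while preserving $L^1$-convergence; thus no real obstacle arises. The main thing to get right is simply invoking density of $C(K)$ in $L^1(|\mu|)$ and matching the hypotheses of Lemma~\ref{seq_abs_cont2} (in particular that ${\rm co}\Delta_K$ and $M^+(K)\cap\lin\Delta_K$ sit inside the sets to which that lemma applies).
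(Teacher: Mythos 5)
Your proposal is correct and follows essentially the same route as the paper: take the Radon--Nikod\'ym derivative, approximate it in $L^1(\mu)$ by continuous functions, apply Lemma~\ref{seq_abs_cont2} to each approximant, and pass to the ($w^*$, in fact norm) limit, choosing the approximants non-negative in the positive case. The only cosmetic difference is your extra care with $|\mu|$, which is unnecessary since every element of $Seq^{\alpha}({\rm co}\,\Delta_K)$ is already a probability measure.
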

\begin{proof}
Let $\vf:K\to\er$ be the Radon-Nikod\'ym derivative of~$\nu$ with respect to~$\mu$.
Fix a sequence $\{\vf_k\}_{k\in \omega}$ in~$C(K)$
such that $\lim_{k}\int_K |\vf-\vf_k|\,{\rm d}\mu=0$. For every $k\in \omega$
we define $\nu_k\in M(K)$ by 
$\nu_k(\Omega):=\int_\Omega \vf_k\, {\rm d}\mu$ for every Borel set $\Omega \sub K$.
Since each $\nu_k$ belongs to $Seq^\alpha(\lin\Delta_K)$ (by Lemma~\ref{seq_abs_cont2})
and $\{\nu_k\}_{k\in \omega}$ is $w^*$-convergent to~$\nu$ (in fact, it is norm convergent in~$M(K)$),
it follows that $\nu\in Seq^{\alpha+1}(\lin\Delta_K)$. For the last assertion, just
observe that $\varphi$ and the $\varphi_k$'s can be chosen non-negative if $\nu\in M^+(K)$.
\end{proof}

We next fix some terminology and prove further auxiliary results. 
Throughout this section we shall deal with the space $X:=\omega\times 2^\omega$, where $2^\omega=\{0,1\}^\omega$ is the Cantor set. 
For any set $B\sub X$ and $n\in\omega$ we
write $B_{|n}:=\{t\in 2^\omega: (n,t)\in B\}$.
Let $\lambda$ denote the usual product probability 
measure on (the Borel $\sigma$-algebra of) $2^\omega$. 

We will construct an algebra $\alg \sub \mathcal{P}(X)$
such that the Stone space $K=\ult(\alg)$ satisfies the required properties.  
Let $\alg_0$ be the algebra of subsets of~$X$ generated
by the products of the form $A \times C$ where $A \sub \omega$
is either finite or cofinite and $C\in \clop(2^\omega)$. 
Clearly, $\alg_0$ is {\em admissible} in the sense of the following definition:

\begin{definition}\label{ex1:1}
We say that a set $B \sub X$ is {\em admissible} if 
$B_{|n}\in\clop(2^\omega)$ for all $n\in\omega$ and $\lim_n \lambda(B_{|n})$ exists.
In such a case, we write 
$$
	\mu(B):=\lim_n \lambda(B_{|n}).
$$
We say that an algebra $\algb \sub \mathcal{P}(X)$ is {\em admissible} if every $B\in\algb$ is admissible.
\end{definition}

\begin{lemma}\label{ex1:2}
Let $\algb \sub \mathcal{P}(X)$ be a countable admissible algebra and let $\mathcal{D} \sub \algb$. Then
there is a set $A\sub X$ such that: 
\begin{itemize}
\item[(i)] the algebra generated by $\algb\cup\{A\}$ is admissible;
\item[(ii)] for every $D\in \mathcal{D}$ we have
$D_{|n}\sub A_{|n}$ for all but finitely many $n\in \omega$;
\item[(iii)] $\mu(A)\le \sum_{D\in \mathcal{D}} \mu(D)$.
\end{itemize}
\end{lemma}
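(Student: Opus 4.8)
The plan is to construct $A$ by prescribing its vertical sections $A_{|n}\in\clop(2^\omega)$. We may assume $\mathcal{D}$ is infinite: if $\mathcal{D}$ is finite we simply take $A=\bigcup\mathcal{D}\in\algb$, so that the algebra generated by $\algb\cup\{A\}$ is $\algb$ itself, (ii) holds for \emph{all} $n$, and (iii) is immediate from $\lambda(A_{|n})\le\sum_{D\in\mathcal{D}}\lambda(D_{|n})$. So fix enumerations $\mathcal{D}=\{D^k:k\in\omega\}$ and $\algb=\{B^i:i\in\omega\}$ (recall $X\in\algb$), and put $U^m:=\bigcup_{k\le m}D^k\in\algb$. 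Since $U^0\sub U^1\sub\cdots$, the numbers $\mu(U^m)$ increase with $m$, and $\mu(U^m)\le\sum_{k\le m}\mu(D^k)$. We will choose a non-decreasing $f\colon\omega\to\omega$ with $f(n)\to\infty$ and set $A_{|n}:=U^{f(n)}_{|n}$ for every $n$, i.e. $A:=\{(n,t)\in X:(n,t)\in U^{f(n)}\}$. For any such $f$, property (ii) is automatic: if $f(n)\ge k$ then $D^k_{|n}\sub U^{f(n)}_{|n}=A_{|n}$, and $f(n)\ge k$ holds for all but finitely many $n$.

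The heart of the matter is the choice of $f$. For $B\in\algb$ and $m\in\omega$ the set $B\cap U^m$ lies in $\algb$, hence is admissible, and $m\mapsto\mu(B\cap U^m)$ is non-decreasing; write $L_B:=\sup_m\mu(B\cap U^m)\in[0,1]$. We want $f$ to grow slowly enough that
$$
	\lim_n\lambda(B_{|n}\cap A_{|n})=L_B\qquad\text{for every }B\in\algb,
$$
where $B_{|n}\cap A_{|n}=(B\cap U^{f(n)})_{|n}$. To this end, choose recursively $N_0<N_1<N_2<\cdots$ in $\omega$ so that
$$
	|\lambda((B^i\cap U^m)_{|n})-\mu(B^i\cap U^m)|<\tfrac1m\qquad\text{whenever }i\le m\text{ and }n\ge N_m,
$$
which is possible since for each $m$ this is a finite conjunction of conditions, each valid for all sufficiently large $n$ by admissibility of $B^i\cap U^m$. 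Put $f(n):=\max\{m:N_m\le n\}$ (and $f(n):=0$ for $n<N_0$); then $f$ is non-decreasing, $f(n)\to\infty$, and $f(n)=m\ge1$ implies $n\ge N_m$. Now fix $B^i$. For all large $n$ we may write $m:=f(n)$ with $m\ge i$ and $m\ge1$, so $n\ge N_m$ and
$$
	\lambda(B^i_{|n}\cap A_{|n})=\lambda((B^i\cap U^m)_{|n})<\mu(B^i\cap U^m)+\tfrac1m\le L_{B^i}+\tfrac1{f(n)},
$$
whence $\limsup_n\lambda(B^i_{|n}\cap A_{|n})\le L_{B^i}$. Conversely, given $\eps>0$ pick $m_0$ with $\mu(B^i\cap U^{m_0})>L_{B^i}-\eps$; since $f(n)\ge m_0$ for all large $n$, we get $B^i_{|n}\cap A_{|n}\supseteq(B^i\cap U^{m_0})_{|n}$ and $\lambda((B^i\cap U^{m_0})_{|n})\to\mu(B^i\cap U^{m_0})>L_{B^i}-\eps$, so $\liminf_n\lambda(B^i_{|n}\cap A_{|n})\ge L_{B^i}$. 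This proves the displayed limit.

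Now (iii) and the admissibility of $A$ follow by taking $B=X$: each $A_{|n}$ is a finite union of clopen sets, hence clopen, and $\mu(A)=\lim_n\lambda(A_{|n})=L_X=\sup_m\mu(U^m)\le\sum_{D\in\mathcal{D}}\mu(D)$. For (i), recall that the algebra generated by $\algb\cup\{A\}$ is $\{(B'\cap A)\cup(B''\sm A):B',B''\in\algb\}$. Given $E=(B'\cap A)\cup(B''\sm A)$, its section $E_{|n}=(B'_{|n}\cap A_{|n})\cup(B''_{|n}\sm A_{|n})$ is clopen and, since the two displayed pieces are disjoint,
$$
	\lambda(E_{|n})=\lambda(B'_{|n}\cap A_{|n})+\lambda(B''_{|n})-\lambda(B''_{|n}\cap A_{|n}),
$$
which converges, by the displayed limit applied to $B'$ and $B''$ and by admissibility of $B''$, to $L_{B'}+\mu(B'')-L_{B''}$. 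Hence $E$ is admissible, and (i) holds.

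The only genuine obstacle is the recursive construction of $f$: one needs a \emph{single} growth rate that simultaneously damps the oscillation of $\lambda((B^i\cap U^m)_{|n})$ for all of the countably many $B^i$ at once, which is exactly what the thresholds $N_m$ accomplish. Granting that, properties (i)--(iii) reduce to routine manipulations with the finite additivity of $\lambda$.
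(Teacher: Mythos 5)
Your proof is correct and is essentially the paper's own argument: your $A_{|n}=U^{f(n)}_{|n}$ with thresholds $N_m$ is exactly the paper's block construction $A_{|n}=(\tilde D_k)_{|n}$ for $g(k)\le n<g(k+1)$, with $f$ playing the role of the inverse of~$g$. The only cosmetic difference is that you identify the limit $L_B=\sup_m\mu(B\cap U^m)$ and argue via $\limsup/\liminf$, where the paper shows the same sequences are Cauchy.
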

\begin{proof}
Let $\{B_j:j\in\omega\}$ and $\{D_j:j\in \omega\}$ be enumerations of~$\algb$ and~$\mathcal{D}$,
respectively. For every $k\in \omega$, we denote by~$\algb_k \sub \mathcal{P}(X)$ the finite algebra generated by 
the collection $\{B_j:j\le k\}\cup\{D_j:j\leq k\}$ and we set
$\tilde{D}_k:=D_0\cup\ldots\cup D_k \in \algb_k$.
By the admissibility of~$\algb$ we can define a strictly increasing function $g:\omega\to\omega$ such that
for every $k \in \omega$ and $n \ge g(k)$ we have
\begin{equation}\label{equation:adm}
	\big|\mu(C)-\lambda(C_{|n})\big| \leq \frac{1}{k+1}
	\quad
	\mbox{for all }C\in \algb_k.
\end{equation}
Define a set $A\sub X$ by declaring that 
\[
	A_{|n}:=(\tilde{D}_k)_{|n} \quad\mbox{whenever } g(k)\le n < g(k+1)
\]
and $A_{|n}:=\emptyset$ if $n<g(0)$. Clearly, $A$ satisfies~(ii). 

To prove~(i), notice first that every element~$B$ of the algebra $\algb'$
generated by $\algb\cup\{A\}$ is of the form
$B=(B_j\cap A) \cup (B_i\sm A)$ where $i,j\in \omega$. Since 
$\algb$ is admissible and $A_{|n}\in \clop(2^\omega)$ for every $n\in \omega$, 
we have $B_{|n}\in \clop(2^\omega)$ for every $n\in \omega$. 
To prove the admissibility of~$B$ it suffices to check that
the sequences $\big\{\lambda((B_{j}\cap A)_{|n})\big\}_{n\in \omega}$ and
$\big\{\lambda((B_{i}\cap A)_{|n})\big\}_{n\in \omega}$ are Cauchy, because
$$
	\lambda(B_{|n})=
	\lambda((B_{j}\cap A)_{|n})+
	\lambda((B_{i}\sm A)_{|n})=
	\lambda((B_{j}\cap A)_{|n})+
	\lambda((B_{i})_{|n})-
	\lambda((B_{i}\cap A)_{|n})
$$
for every $n\in \omega$. Fix $\eps>0$. Since $\mu$ is a probability measure on~$\algb$, 
the sequence $\big\{\mu(B_j\cap \tilde{D}_k))\big\}_{k\in \omega}$ is increasing and bounded and 
there is $k_0 \in \omega$ such that
\begin{equation}\label{equation:choicek0}
	\mu(B_j\cap \tilde{D}_k \sm \tilde{D}_{k_0})\leq \epsilon \quad
	\mbox{whenever } k\geq k_0.
\end{equation}
Of course, we can assume further that $k_0 \geq j$ and $\frac{1}{k_0+1} \leq \epsilon$.
Take any $n\ge g(k_0)$. Then $g(k)\le n < g(k+1)$ for some $k\geq k_0$, hence
$(B_{j}\cap A)_{|n}=(B_j \cap \tilde{D}_k)_{|n}$ and so
\begin{multline}\label{multline:1}
	\big|\lambda((B_{j}\cap A)_{|n})-\mu(B_j \cap \tilde{D}_{k_0})\big|=
	\big|\lambda((B_{j}\cap \tilde{D}_k)_{|n})-\mu(B_j \cap \tilde{D}_{k_0})\big| \leq \\
	\leq |\lambda((B_{j}\cap \tilde{D}_k)_{|n})-\mu(B_{j}\cap \tilde{D}_k)\big|+
	\mu(B_j \cap \tilde{D}_{k} \sm \tilde{D}_{k_0})
	\leq \frac{1}{k+1}+\epsilon \leq 2\epsilon,
\end{multline}
by~\eqref{equation:adm} and~\eqref{equation:choicek0}. It follows that
$\big|\lambda((B_{j}\cap A)_{|n})-\lambda((B_{j}\cap A)|_{m})\big|\leq 4\epsilon$
whenever $n,m\geq g(k_0)$. This shows that the sequence $\big\{\lambda((B_{j}\cap A)_{|n})\big\}_{n\in \omega}$ is Cauchy.

Finally, (iii) follows from the argument above by choosing $j\in \omega$ with $B_j=X$. Indeed, by taking limits
in~\eqref{multline:1} when $n\to \infty$ we get $\big|\mu(A)-\mu(\tilde{D}_{k_0})\big| \leq 2\epsilon$
and so 
$$
	\mu(A)\leq 2\epsilon + \mu(\tilde{D}_{k_0}) \leq 2\epsilon + \sum_{i\leq k_0}\mu(D_i)\leq 2\epsilon + \sum_{i\in \omega}\mu(D_i).
$$
As $\epsilon>0$ is arbitrary, we have $\mu(A)\leq \sum_{i\in \omega}\mu(D_i)$ and the proof is over.
\end{proof}

Given an admissible algebra $\algb \sub \mathcal{P}(X)$, we write $\NN(\algb)$ to denote the collection
of all decreasing sequences $\{B_k\}_{k\in \omega}$ in~$\algb$ such
that $\lim_k\mu(B_k)= 0$. 

\begin{lemma}\label{ex1:4}
Let $\algb \sub \mathcal{P}(X)$ be a countable admissible algebra,  
$\mathcal{S} \sub \NN(\algb)$ a countable collection and $\eps>0$.
Then there is a set $A\sub X$ such that:
\begin{itemize}
\item[(i)] the algebra generated by $\algb\cup \{A\}$ is admissible;
\item[(ii)] for every $\{B_k\}_{k\in \omega}\in \mathcal{S}$ there is $k_0\in \omega$ such that
\[ 
	(B_{k_0})_{|n}\sub A_{|n} \quad \mbox{for all but finitely many } n\in \omega;
\]
\item[(iii)] $\mu(A)\leq\eps$.
\end{itemize}
In this case we say that $\mathcal{S}$ is \emph{$\eps$-captured} by~$A$.
\end{lemma}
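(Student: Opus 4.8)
The plan is to reduce the statement to Lemma~\ref{ex1:2} by replacing each decreasing sequence in~$\mathcal{S}$ with a single suitably small tail. First I would enumerate $\mathcal{S}=\{\{B^j_k\}_{k\in\omega}:j\in\omega\}$ (with repetitions allowed if $\mathcal{S}$ is finite). Since each of these sequences lies in $\NN(\algb)$, we have $\lim_k\mu(B^j_k)=0$ for every~$j$, so we may fix an index $k_0(j)\in\omega$ with $\mu\big(B^j_{k_0(j)}\big)\le\eps\,2^{-(j+1)}$. Put $\mathcal{D}:=\{B^j_{k_0(j)}:j\in\omega\}\sub\algb$. Then, summing over the distinct elements of~$\mathcal{D}$,
\[
	\sum_{D\in\mathcal{D}}\mu(D)\ \le\ \sum_{j\in\omega}\mu\big(B^j_{k_0(j)}\big)\ \le\ \sum_{j\in\omega}\eps\,2^{-(j+1)}\ =\ \eps .
\]

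Next I would apply Lemma~\ref{ex1:2} to the algebra~$\algb$ and the family~$\mathcal{D}$ just constructed, obtaining a set $A\sub X$. Condition~(i) of Lemma~\ref{ex1:2} is precisely condition~(i) that we want. Condition~(iii) of Lemma~\ref{ex1:2} gives $\mu(A)\le\sum_{D\in\mathcal{D}}\mu(D)\le\eps$, which is~(iii). Finally, condition~(ii) of Lemma~\ref{ex1:2} tells us that $D_{|n}\sub A_{|n}$ for all but finitely many~$n\in\omega$, for each $D\in\mathcal{D}$; applying this to $D=B^j_{k_0(j)}$ yields exactly~(ii) of the present lemma, with witnessing index $k_0:=k_0(j)$. (Since the sequence $\{B^j_k\}_k$ is decreasing, this even gives $(B^j_k)_{|n}\sub A_{|n}$ eventually for every $k\ge k_0(j)$, though only one such index is required.)

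I do not expect any genuine obstacle: once Lemma~\ref{ex1:2} is in hand, the argument is essentially bookkeeping. The one point that must be handled with care is the selection of the tails $B^j_{k_0(j)}$, which have to be chosen so that their individual $\mu$-masses decay geometrically in~$j$; this guarantees that their total mass, and hence $\mu(A)$ via Lemma~\ref{ex1:2}(iii), stays below~$\eps$. This choice is available precisely because membership of $\{B^j_k\}_k$ in $\NN(\algb)$ forces $\mu(B^j_k)\to 0$ as $k\to\infty$.
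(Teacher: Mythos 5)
Your argument is correct and coincides with the paper's own proof: both enumerate $\mathcal{S}$, choose a tail $B^j_{k(j)}$ of each sequence with $\mu(B^j_{k(j)})\le\eps/2^{j+1}$, and apply Lemma~\ref{ex1:2} to the resulting collection $\mathcal{D}$. Nothing further is needed.
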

\begin{proof}
Enumerate $\mathcal{S}=\{\{B^j_k\}_{k\in \omega}: j\in \omega\}$.
For every $j\in \omega$ we can pick $k(j)\in \omega$ such that $\mu(B^j_{k(j)})\leq\eps/2^{j+1}$.
Now it suffices to apply Lemma~\ref{ex1:2} to the collection $\mathcal{D}:=\{B^j_{k(j)}:j\in \omega\}$.
\end{proof}

\begin{lemma}\label{ex1:4a}
Let $\algb \sub \PP(X)$ be an admissible algebra containing~$\alg_0$.
Let $\{\nu_k\}_{k\in \omega}$ be a sequence of probability measures on~$\PP(X)$ such that:
\begin{enumerate}
\item[(i)] each $\nu_k$ is supported by a finite subset of~$X$; 
\item[(ii)] $\lim_k \nu_k(B)=\mu(B)$ for every $B\in\algb$.
\end{enumerate}
Let $B_0\in\algb$ be such that $\mu(B_0)>0$.
Then there is $A\sub B_0$ such that the algebra generated by
$\algb\cup \{A\}$ is admissible and $\{\nu_k(A)\}_{k\in \omega}$ does not converge to $\mu(A)$.
\end{lemma}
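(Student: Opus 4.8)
The plan is to reduce the statement to the easier task of finding a ``thin'' set $A'\subseteq B_0$ whose sections $A'_{|n}$ are clopen with $\lambda(A'_{|n})\to 0$, yet with $\nu_k(A')$ bounded away from~$0$ along a subsequence of~$k$; then I take $A:=B_0\setminus A'$. The advantage of a thin $A'$ over, say, a set of measure $\mu(B_0)/2$ ``equidistributed'' inside $B_0$ is that admissibility of the enlarged algebra then costs nothing and requires no diagonalisation against $\algb$; meanwhile the lower bound on $\nu_k(A')$ keeps $\nu_k(A)$ away from $\mu(A)=\mu(B_0)$.

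For each $k$ fix a finite $F_k\subseteq X$ with $\nu_k(F_k)=1$ (hypothesis~(i)). Two consequences of the hypotheses: since $[0,N)\times 2^\omega\in\alg_0\subseteq\algb$ has sections empty from index~$N$ on, $\mu([0,N)\times 2^\omega)=0$, so by~(ii) $\lim_k\nu_k([0,N)\times 2^\omega)=0$ for every $N\in\omega$; and since $\lambda((B_0)_{|n})\to\mu(B_0)>0$ there is $m_0$ with $(B_0)_{|n}\neq\emptyset$ for all $n\ge m_0$, while $\nu_k(B_0)>\mu(B_0)/2$ for all large~$k$. Now build $A'$ recursively in blocks, simultaneously choosing $k_0<k_1<\cdots$. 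Put $N_0:=m_0$ and $A'_{|n}:=\emptyset$ for $n<N_0$. Given $N_j$ and $A'_{|n}$ for $n<N_j$, choose $k_j>k_{j-1}$ so large that $\nu_{k_j}([0,N_j)\times 2^\omega)<\mu(B_0)/4$ and $\nu_{k_j}(B_0)>\mu(B_0)/2$; then the finite set
$$
G_j:=F_{k_j}\cap B_0\cap\big([N_j,\infty)\times 2^\omega\big)
$$
satisfies $\nu_{k_j}(G_j)=\nu_{k_j}\big(B_0\cap([N_j,\infty)\times 2^\omega)\big)>\mu(B_0)/2-\mu(B_0)/4=:\eta>0$, so in particular $G_j\neq\emptyset$. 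Let $M_j$ be the largest first coordinate occurring among points of~$G_j$, put $N_{j+1}:=M_j+1>N_j$, and for $N_j\le n<N_{j+1}$ let $A'_{|n}$ be a clopen subset of $(B_0)_{|n}$ containing the finite set $\{t:(n,t)\in G_j\}$ with $\lambda(A'_{|n})\le 2^{-j}$ — such a set exists since in $(2^\omega,\lambda)$ each of those finitely many points has clopen neighbourhoods of arbitrarily small measure inside the clopen set $(B_0)_{|n}$ — taking $A'_{|n}:=\emptyset$ when that finite set is empty. This defines $A'\subseteq B_0$, whence $A:=B_0\setminus A'\subseteq B_0$.

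It remains to verify admissibility and the failure of convergence. As $\lambda(A'_{|n})\le 2^{-j}$ for all $n\ge N_j$, we have $\lambda(A'_{|n})\to 0$; hence $A'$ and $A$ are admissible with $\mu(A')=0$ and $\mu(A)=\mu(B_0)$. Since $B_0\in\algb$, the algebra generated by $\algb\cup\{A\}$ coincides with the one generated by $\algb\cup\{A'\}$, whose typical member has the form $(B_1\cap A')\cup(B_2\setminus A')$ with $B_1,B_2\in\algb$; its $n$-th section is clopen and is the disjoint union of $(B_1)_{|n}\cap A'_{|n}$ and $(B_2)_{|n}\setminus A'_{|n}$, of $\lambda$-measures tending to~$0$ and to~$\mu(B_2)$, so the section measure converges and the algebra is admissible. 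For the last point: the points of~$G_j$ lie in columns belonging to $[N_j,M_j]\subseteq[N_j,N_{j+1})$ and their second coordinates were put into the corresponding $A'_{|n}$, so $G_j\subseteq A'$; with $G_j\subseteq F_{k_j}$ this gives $\nu_{k_j}(A')\ge\nu_{k_j}(G_j)>\eta$, hence $\nu_{k_j}(A)=\nu_{k_j}(B_0)-\nu_{k_j}(A')<\nu_{k_j}(B_0)-\eta$, and since $\nu_{k_j}(B_0)\to\mu(B_0)$ we obtain $\limsup_j\nu_{k_j}(A)\le\mu(B_0)-\eta<\mu(B_0)=\mu(A)$. Thus $\{\nu_k(A)\}_k$ does not converge to~$\mu(A)$, as required.

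I expect the only point needing an idea to be this reduction to a thin ``capturing'' set, which rests on the fact that $\nu_k$ carries vanishing mass on each fixed finite set of columns; a more direct construction splitting $B_0$ would have to control $\lim_n\lambda(B_{|n}\cap A_{|n})$ simultaneously for all $B\in\algb$, whereas for a thin $A'$ these limits are automatically~$0$. What then remains is routine bookkeeping: $G_j\neq\emptyset$ keeps the blocks $[N_j,N_{j+1})$ nonempty and forces $N_j\to\infty$, and a finite subset of the Cantor set always lies in arbitrarily small clopen subsets of any prescribed clopen neighbourhood.
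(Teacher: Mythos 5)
Your proposal is correct and is essentially the paper's own argument: both proofs build a ``thin'' subset of $B_0$ whose sections are small clopen sets covering the (finitely many) support points of $\nu_{k_j}$ lying in the columns of the $j$-th block, so that the set has $\mu$-measure $0$ (making admissibility of the enlarged algebra automatic) while $\nu_{k_j}$ gives it mass at least $\mu(B_0)/4$. The only difference is that you pass at the end to the complement $A=B_0\sm A'$, which is harmless but unnecessary --- the thin set itself already witnesses the conclusion, and that is the set the paper uses.
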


\begin{proof} For every $k\in \omega$ we fix a finite set $S_k \sub X$
such that $\nu_k(S_k)=1$. We begin by choosing two strictly increasing sequences in~$\omega$, say
$\{n_j\}_{j\in \omega}$ and $\{k_j\}_{j\in \omega}$, such that
$n_0=k_0=0$ and for every $j\in \omega$ we have:
\begin{enumerate}
\item[(a)] $\nu_{k_{j+1}}(R_j\cap B_0)> \mu(B_0)/2$, where $R_j:=(\omega \sm n_j)\times 2^\omega \in \alg_0$;
\item[(b)] $S_{k_{j+1}}\sub n_{j+1}\times 2^\omega$.
\end{enumerate}
This can be done by induction. Indeed, given $n_j,k_j \in \omega$, the conditions
$$
	\lim_{k}\nu_{k}(R_j)=\mu(R_j)=1 
	\quad
	\mbox{and}
	\quad
	\lim_{k}\nu_k(B_0)= \mu(B_0)>0
$$
ensure the existence of $k_{j+1}>k_j$ for which (a) holds; then we choose $n_{j+1}>n_j$ satisfying~(b)
(bear in mind that $S_{k_{j+1}}$ is finite). 

Fix $n\in \omega$. Take $j\in \omega$ such that $n_j \leq n < n_{j+1}$. 
Since $\lambda$ is atomless and $(B_0 \cap S_{k_{j+1}})_{|n}$ is finite, there is
$C_n\in \clop(2^\omega)$ such that 
\begin{equation}\label{equation:sectionCn}
	(B_0 \cap S_{k_{j+1}})_{|n} \sub C_n \sub (B_0)_{|n} \quad\mbox{and}
\end{equation}
\begin{equation}\label{equation:sectionCn2}
	\lambda(C_n)\leq \frac{1}{j+1}.
\end{equation}

Now, define a set $A \sub B_0$ by declaring that $A_{|n}:=C_n$ for every $n\in \omega$. We claim that $A$ satisfies
the required properties. Note that the algebra $\algb'$
generated by $\algb\cup\{A\}$ is made up of all sets of the form
$(B_1\cap A) \cup (B_2\sm A)$ where $B_1,B_2\in \algb$. 
Thus, since $\algb$ is admissible and $A_{|n}\in \clop(2^\omega)$ for every $n\in \omega$, 
we also have $B_{|n}\in \clop(2^\omega)$ for every $B\in \algb'$ and $n\in \omega$. 
On the other hand, \eqref{equation:sectionCn2}~implies that $\lim_n\lambda(A_{|n})=0$, hence $\mu(A)=0$ and
for any $B_1,B_2\in \algb$ there exists the limit
$$
	\lim_n\lambda\Big(\big((B_1\cap A) \cup (B_2\sm A)\big)_{|n}\Big)=\mu(B_2).
$$
This proves that~$\algb'$ is admissible.

On the other hand, we claim that for every $j\in \omega$ we have 
\begin{equation}\label{equation:inclusionRj}
	R_j \cap B_0 \cap S_{k_{j+1}} \sub A\cap S_{k_{j+1}}.
\end{equation}
Indeed, take $n\in \omega$. If either $n<n_j$ or $n\geq n_{j+1}$, then 
$(R_j \cap B_0 \cap S_{k_{j+1}})_{|n}=\emptyset$ (bear in mind~(b)). If $n_j \leq n < n_{j+1}$, then 
\eqref{equation:sectionCn} implies that $(R_j \cap B_0 \cap S_{k_{j+1}})_{|n} \sub (A\cap S_{k_{j+1}})_{|n}$.
This proves the inclusion~\eqref{equation:inclusionRj}.

It follows that for every $j\in \omega$ we have
\begin{multline*}
	\nu_{k_{j+1}}(A)=
	\nu_{k_{j+1}}(A\cap S_{k_{j+1}})\stackrel{\eqref{equation:inclusionRj}}{\geq}
	\nu_{k_{j+1}}(R_j\cap B_0\cap S_{k_{j+1}})= \\ =
	\nu_{k_{j+1}}(R_j\cap B_0) \stackrel{{\rm (a)}}{>} \frac{\mu(B_0)}{2}>0.
\end{multline*}
Hence the sequence
$\{\nu_k(A)\}_{k\in \omega}$ does not converge to $\mu(A)=0$.
\end{proof}

After those preparations we are ready for our main result.

\begin{theo} \label{ex1:9}
Assuming CH there is a compact space $K$ such that
$$ 
	Seq^1({\rm co}\Delta_K) \neq Seq({\rm co}\Delta_K) = P(K).
$$
\end{theo}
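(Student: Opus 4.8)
The plan is to build the algebra $\alg$ by a recursion of length $\omega_1$ --- this is where CH is used --- as the union $\alg=\bigcup_{\xi<\omega_1}\alg_\xi$ of an increasing chain of \emph{countable} admissible algebras starting from $\alg_0$, and then to set $K:=\ult(\alg)$ and let $\mu$ be the Radon probability on $K$ extending the admissible measure (Definition~\ref{ex1:1}). Two features hold for \emph{every} such $K$, before any bookkeeping. First, each set that the lemmas of this section add to the algebra has all of its sections in $\clop(2^\omega)$; hence the restriction of $\alg$ to each column $\{n\}\times 2^\omega$ is exactly $\clop(2^\omega)$, so each clopen set $K_n:=\widehat{\{n\}\times 2^\omega}$ is a metrizable copy of the Cantor set, $X$ embeds as a dense subset of $K$, and $U:=\bigcup_n K_n$ is an open dense subset homeomorphic to $\omega\times 2^\omega$. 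Second, for each $n$ the formula $\sigma_n(\widehat A):=\lambda(A_{|n})$ defines a Radon probability on $K$ carried by the metrizable slice $K_n$, so that $\sigma_n\in Seq^1({\rm co}\Delta_K)$; and since $\alg$ is admissible, $\sigma_n(\widehat A)=\lambda(A_{|n})\to\mu(A)$ for every $A\in\alg$, i.e. $\{\sigma_n\}_{n}$ is $w^*$-convergent to $\mu$. Therefore $\mu\in Seq^2({\rm co}\Delta_K)$ with no bookkeeping at all.

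The recursion has two tasks. For $\mu\notin Seq^1({\rm co}\Delta_K)$: as $X$ is countable, the family $\mathcal F$ of finitely supported rational-valued probabilities on $X$ is countable, so under CH we may list all sequences from $\mathcal F$ as $\{(\nu^\xi_k)_{k\in\omega}:\xi<\omega_1\}$; at stage $\xi$, if $\lim_k\nu^\xi_k(B)=\mu(B)$ holds for every $B\in\alg_\xi$, we apply Lemma~\ref{ex1:4a} with $B_0=X$ to obtain a set $A_\xi$ with $\nu^\xi_k(A_\xi)\not\to\mu(A_\xi)$, and we put $A_\xi$ into $\alg_{\xi+1}$. In the final $K$ this forces: no sequence from $\mathcal F$ is $w^*$-convergent to $\mu$. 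For $Seq({\rm co}\Delta_K)=P(K)$: in the remaining steps we list (again only $\omega_1$-many, using CH and the countability of the $\alg_\xi$) all countable $\mathcal D\sub\alg$ and all countable $\mathcal S\sub\NN(\alg)$ and, at the appropriate stages, add the dominating sets of Lemma~\ref{ex1:2} and the capturing sets of Lemma~\ref{ex1:4} (Lemma~\ref{extensions_of_measures} is the tool that keeps the relevant measures under control here). The outcome is that $\alg$ becomes ``rich enough'' over the countable algebra $\alg_0$ --- in the sense of minimal generation used in~\cite{bor-ple} --- to guarantee $Seq({\rm co}\Delta_K)=P(K)$. Unions at limit stages preserve admissibility and countability because every added set has clopen sections.

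It remains to harvest the two conclusions. That $Seq({\rm co}\Delta_K)=P(K)$, in fact with rank $3$: we have $\mu\in Seq^2({\rm co}\Delta_K)$, so every $\nu\in M^+(K)$ absolutely continuous with respect to $\mu$ lies in $Seq^3(\lin\Delta_K)$ by Lemma~\ref{seq_abs_cont}; every $\nu\in M^+(K)$ carried by $U$ lies in $Seq^2(\lin\Delta_K)$ by splitting it across the metrizable columns $K_n$ and taking a norm-convergent sum; and an arbitrary $\nu\in P(K)$ is reached through the richness of $\alg$ over $\alg_0$ exactly as in~\cite{bor-ple}, the rank staying at $3$; passing from $\lin\Delta_K$ back to ${\rm co}\Delta_K$ is automatic since the limits in question are probabilities. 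Hence $Seq^3({\rm co}\Delta_K)=P(K)$. That $\mu\notin Seq^1({\rm co}\Delta_K)$: if not, then by~\cite{nie} some sequence from ${\rm co}\Delta_K$ is $w^*$-convergent to $\mu$, and, using the density of $X$ in $K$ together with the metrizability of each $\ult(\alg_\xi)$, one converts it into a sequence from $\mathcal F$ that is still $w^*$-convergent to $\mu$ on all of $\alg$ --- but this was excluded above. Combining, $Seq^1({\rm co}\Delta_K)\neq P(K)=Seq({\rm co}\Delta_K)$.

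The step I expect to be the genuine obstacle is the conversion in the proof of $\mu\notin Seq^1({\rm co}\Delta_K)$: the terms of a $w^*$-convergent-to-$\mu$ sequence from ${\rm co}\Delta_K$ may be supported on badly behaved ultrafilters of $K\sm U$, and one has to replace such a sequence by one supported on $X$ without destroying convergence against the \emph{uncountably many} clopen subsets of $K$, and in a way compatible with the bookkeeping of Lemma~\ref{ex1:4a}. Running close behind is the combinatorial problem of satisfying the demands of Lemmas~\ref{ex1:2} and~\ref{ex1:4} simultaneously with the first task while keeping $\alg$ admissible, and verifying that the resulting ``richness'' really yields $Seq^3({\rm co}\Delta_K)=P(K)$.
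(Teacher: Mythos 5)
Your skeleton matches the paper's: the $\omega_1$-chain of countable admissible algebras, CH to enumerate sequences of finitely supported measures, Lemma~\ref{ex1:4a} to diagonalize, the slice measures $\sigma_n$ giving $\mu\in Seq^2({\rm co}\Delta_K)$, and Lemma~\ref{seq_abs_cont} for the $\mu$-absolutely continuous part. But both places you flag as obstacles are genuine gaps, and they are filled in the paper by a single idea you are missing: the \emph{purpose} of the capturing sets of Lemma~\ref{ex1:4} is to force that every closed $\mathcal{G}_\delta$ subset of $K^*:=K\sm\bigcup_n K_n$ with $\mu$-measure zero is metrizable, that the closure of a countable union of such sets is again metrizable and $\mu$-null, and hence that every closed \emph{separable} subset of $K^*$ is $\mu$-null (Claims III--V of the paper). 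You invoke Lemma~\ref{ex1:4} but never say what it buys you, and instead appeal to ``richness as in minimal generation'' from \cite{bor-ple} --- that is not what happens here and does not by itself reach an arbitrary $\nu\in P(K)$.

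Concretely: (1) For $\mu\notin Seq^1$, you should not try to \emph{convert} a sequence $\{\theta_k\}\sub{\rm co}\Delta_K$ converging to $\mu$ into one supported on $X$ by density/metrizability --- as you suspect, there is no uniform way to move atoms off bad ultrafilters while preserving convergence against uncountably many clopen sets. The paper instead \emph{discards} the part of each $\theta_k$ sitting on $K^*$: the atoms of all the $\theta_k$ lying in $K^*$ form a countable set whose closure $D\sub K^*$ is closed separable, hence $\mu(D)=0$ by the capturing property, and a short estimate shows the truncated measures $\theta_k'$ (now honest finitely supported measures on $X$) still converge to $\mu$, contradicting the diagonalization. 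Note this forces you to diagonalize against all sequences of non-negative finitely supported measures on $X$ (continuum many under CH), not just rational-valued probabilities, since the truncations $\theta_k'$ are neither. (2) For $Seq^3({\rm co}\Delta_K)=P(K)$, the missing case is the part of $\nu$ concentrated on $K^*$ and singular with respect to $\mu$: by regularity it lives on a countable union of closed $\mu$-null subsets of $K^*$, whose closure is metrizable by the capturing property, so this part already lies in $Seq^1(M^+(K)\cap\lin\Delta_K)$. With that, the decomposition $\nu=\nu_1+\nu_2+\nu_3$ (off $K^*$ / absolutely continuous / $\mu$-singular on $K^*$) closes the argument at rank $3$ exactly as you predicted for the other two pieces.
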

\begin{proof} 
Here we use the terminology and notation introduced in the present section. 

\smallskip
{\sc I. The construction.} 
Let $\{\{\nu^\xi_k\}_{k\in \omega}:\xi<\omega_1\}$
be the collection of all sequences of non-negative measures on~$\PP(X)$ 
which are supported by a finite subset of~$X$.
We shall construct by induction an increasing transfinite collection of countable admissible
algebras $\{\alg_\xi: \xi < \omega_1\}$
of subsets of~$X$. We start from the algebra $\alg_0$ already defined and for any limit ordinal $\xi<\omega_1$ we simply 
set $\alg_\xi:=\bigcup_{\eta<\xi}\alg_\eta$. 

For the successor step of the induction, let $\xi < \omega_1$ and suppose we have already constructed 
the algebras $\{\alg_\eta:\eta \leq \xi\}$. For every $\eta \leq \xi$ we enumerate 
$\NN(\alg_\eta)$ as $\{S(\eta,\alpha):\alpha<\omega_1\}$. 
Lemma~\ref{ex1:4} applied to the countable collection
$$
	\mathcal{S}(\xi):=\{S(\eta,\alpha):\, \eta, \alpha<\xi\} \sub \NN(\alg_\xi)
$$
ensures the existence of a set $A(\xi,2) \sub X$
such that $\mathcal{S}(\xi)$ is $(1/2)$-captured by~$A(\xi,2)$.
Since the algebra generated by $\alg_\xi\cup \{A(\xi,2)\}$ is admissible, we 
can apply again Lemma~\ref{ex1:4} to that algebra to find a set $A(\xi,3) \sub X$
such that $\mathcal{S}(\xi)$ is $(1/3)$-captured by~$A(\xi,3)$
and the algebra generated by $\alg_\xi\cup \{A(\xi,2),A(\xi,3)\}$
is admissible. Continuing in this manner we obtain a sequence 
$\{A(\xi,j):j \geq 2\}$ of subsets of~$X$
such that: 
\begin{enumerate}
\item[(a)] $\mathcal{S}(\xi)$ is $(1/j)$-captured by~$A(\xi,j)$
for all $j\geq 2$;
\item[(b)] the algebra $\bar{\alg}_\xi$ generated by $\alg_\xi$
and the family $\{A(\xi,j):j\ge 2\}$ is admissible. 
\end{enumerate}
We now define a set $A(\xi,1)\sub X$ by distinguishing two cases:
\begin{itemize}
\item If $\lim_k \nu^\xi_k(B)=\mu(B)$ for every $B\in \bar{\alg}_\xi$, 
then we can apply Lemma~\ref{ex1:4a} to find 
a set $D_\xi\subset X\sm A(\xi,2)$ such that the algebra generated by
$\bar{\alg}_\xi\cup \{D_\xi\}$ is admissible and $\{\nu^\xi_k(D_\xi)\}_{k\in \omega}$ does not converge to $\mu(D_\xi)$.
Set $A(\xi,1):=X\sm D_\xi$. 
\item Otherwise, we set $A(\xi,1):=A(\xi,2)$.
\end{itemize}
We now conclude the successor step by letting $\alg_{\xi+1}$ 
be the countable algebra generated by $\alg_\xi$ and the family $\{A(\xi,j):j\ge 1\}$.
Observe that $\alg_{\xi+1}$ is admissible.

Define an admissible algebra $\alg \sub \PP(X)$ by $\alg:=\bigcup_{\xi<\omega_1}\alg_\xi$.
Note that $\alg$ has the following properties:
\begin{itemize}
\item[(i)] for every countable collection $\mathcal{S} \sub \NN(\alg)$ and every $\eps>0$ there is $A\in\alg$
such that $\mathcal{S}$ is $\eps$-captured by~$A$;
\item[(ii)] for every sequence $\{B_k\}_{k\in \omega}\in \NN(\alg)$ there exists $\xi_0<\omega_1$ such that
for every $\xi_0\leq\xi<\omega_1$ and every $j\ge 1$ there is $k\in \omega$ such that 
$(B_k)_{|n}\sub A(\xi,j)_{|n}$ for all but finitely many~$n\in \omega$.
\end{itemize}
Indeed, these facts follow from property~(a) above, bearing in mind that any countable collection~$\mathcal{S}\sub \NN(\alg)$ 
is contained in~$\mathcal{S}(\xi_0)$ for some $\xi_0 < \omega_1$.

\smallskip

{\sc II. Introducing the compact space $K$.}
We now consider the compact space $K=\ult(\alg)$. 
Let $K^* \sub K$ be the set of all ultrafilters that contain no set of the form $\{n\}\times 2^\omega$. 
We claim that every $\FF \in K \sm K^*$ is of the form $\FF_x:=\{A\in \alg:x\in A\}$ for some $x\in X$.
Indeed, if $\FF$ contains $\{n\}\times 2^\omega$ for some $n\in \omega$,
then the collection $\{A_{|n}: A\in \FF\}$ 
is an ultrafilter on~$\clop(2^\omega)$, so the intersection 
$\bigcap \{A_{|n}: A\in \FF\}$ consists of a single point $t\in 2^\omega$, and  
therefore $\FF=\FF_x$ for $x:=(n,t)\in X$. 
Since $K \sm K^*=\bigcup_{n\in \omega} \widehat{\{n\}\times 2^\omega}$, 
the set $K^*$ is closed in~$K$. For any $A,B \in \alg$ we have
\begin{equation}\label{star}
\widehat{B}\cap K^* \sub \widehat{A}\cap K^* \quad \mbox{whenever }
B_{|n} \sub A_{|n} \mbox{ for all but finitely many }n\in \omega.
\end{equation}

Since $\alg$ is admissible, for every $n\in \omega$ we have a probability
measure $\mu_n$ on $\alg$ defined by 
$$
	\mu_n(A):=\lambda(A_{|n})
$$ 
and $\lim_{n} \mu_n(A)= \mu(A)$ for every $A\in \alg$.
Note that $\mu$ (seen as a Radon measure on~$K$) is concentrated on~$K^\ast$, because
$\mu(\{n\}\times 2^\omega)=0$ for every $n\in \omega$. We also have
\begin{equation}\label{equation:atomless}
	\mu(\{\FF\})=0 \quad \mbox{for every }\FF \in K.
\end{equation} 
Indeed, fix $\epsilon>0$ and take any partition $\mathcal{C}$ of~$2^\omega$ into finitely many clopen
sets with $\lambda(C)\leq \epsilon$ for all $C\in \mathcal{C}$. For every $\FF\in K$ there is
some $C\in \mathcal{C}$ such that $\omega \times C \in \FF$  
and so $\mu(\{\FF\})\leq \mu(\omega \times C) = \lambda(C) \leq \epsilon$. 
As $\epsilon>0$ is arbitrary, this proves~\eqref{equation:atomless}.

\smallskip
{\sc III. Claim.} {\em Every closed $\mathcal{G}_\delta$ set $H\sub K^*$ with $\mu(H)=0$ is metrizable.}

Indeed, it is easy to see that we can write 
\begin{equation}\label{equation:H}
	H=\bigcap_{k\in \omega} \widehat{B_k} \cap K^*
\end{equation}
for some $\{B_k\}_{k\in \omega}\in \NN(\alg)$. Now let $\xi_0<\omega_1$ be as in property~I(ii) above.
We shall check that the countable family $\mathcal{A}:=\{\widehat{A} \cap H: A\in\alg_{\xi_0}\}$ is a topological basis of~$H$
(which implies that $H$ is metrizable).

To this end it is sufficient to show that $\widehat{A} \cap H \in \mathcal{A}$ whenever $A \in \alg$. 
We proceed by transfinite induction bearing in mind that $\alg=\bigcup_{\xi <\omega_1}\alg_\xi$. Let $\xi < \omega_1$ and suppose 
that $\widehat{A} \cap H\in \mathcal{A}$ whenever $A \in \bigcup_{\eta < \xi}\alg_\eta$. If either $\xi$ is a limit ordinal
or $\xi \leq \xi_0$ then there is nothing to prove. 
If $\xi$ is of the form $\xi=\eta+1$ for some $\eta \geq \xi_0$, set 
$$
	\algc:=\{A\in \alg_\xi: \, \widehat{A} \cap H\in \mathcal{A}\}.
$$
Observe that $\algc$ is an algebra of subsets of~$X$ containing~$\alg_{\eta}$.
By the choice of~$\xi_0$ (bearing in mind~\eqref{star}), for every $j\ge 1$ there is $k\in \omega$ such that 
$\widehat{B_k} \cap K^* \sub \widehat{A(\eta,j)} \cap K^*$, so 
$H \sub \widehat{A(\eta,j)}$ (by~\eqref{equation:H}), hence $\widehat{A(\eta,j)} \cap H=H\in \mathcal{A}$ and therefore
$A(\eta,j) \in \algc$. It follows that $\alg_\xi=\algc$. 
This proves that $\widehat{A} \cap H \in \mathcal{A}$ whenever $A \in \alg$, as required. 

\smallskip
{\sc IV. Claim.}
{\em For every $j\in \omega$, let $H_j \sub K^*$ be a closed $\mathcal{G}_\delta$
set with $\mu(H_j)=0$. Then $F:=\overline{\bigcup_{j\in \omega} H_j}$ 
is metrizable and $\mu(F)=0$.}

Indeed, as in the previous step, for every $j\in \omega$ 
we can choose $\{B^j_k\}_{k\in \omega}\in \NN(\alg)$
such that
$$
	H_j=\bigcap_{k\in \omega} \widehat{B^j_k} \cap K^*.
$$
Fix $i\in\omega$.
By property I(i), there is $A_i\in \alg$ 
such that the collection $\{\{B^j_k\}_{k\in \omega}:j\in \omega\}$ is $\frac{1}{i+1}$-captured 
by~$A_i$. In view of~\eqref{star}, for every $j\in \omega$ we have 
$$
	H_j=\bigcap_{k\in \omega}\widehat{B^j_k}\cap K^* 
	\sub \widehat{A_i}\cap K^*
$$
and so $F\sub \widehat{A_i}\cap K^*$. Since $\mu(\widehat{A_i})\leq \frac{1}{i+1}$ 
for every $i\in \omega$, we have $\mu(F)=0$. Moreover, 
since $F \sub H:=\bigcap_{i\in \omega}\widehat{A_i}\cap K^*$ and $H$ is metrizable (by Claim~III), it
follows that $F$ is metrizable as well.

\smallskip
{\sc V. Claim.} {\em For every closed separable set $D\sub K^*$ we have $\mu(D)=0$.}

Indeed, let $\{\FF_j:j\in \omega\}$ be a dense sequence in~$D$. For every $j\in \omega$ we have
$\mu(\{\FF_j\})=0$ (by~\eqref{equation:atomless}) and so there is a closed $\mathcal{G}_\delta$
set $H_j \sub K^*$ containing~$\FF_j$ with $\mu(H_j)=0$. Since $D \sub \overline{\bigcup_{j\in \omega} H_j}$, an appeal
to Claim~IV ensures that $\mu(D)=0$.

\smallskip
{\sc VI. Claim.} {\em The measure $\mu$ does not belong to $Seq^1({\rm co}\Delta_K)$.}

Our proof is by contradiction. Suppose there is  
a sequence $\{\theta_k\}_{k\in \omega}$ in~${\rm co}\Delta_K$ which is $w^*$-convergent to~$\mu$. 
For every $k\in \omega$, consider the finite set 
$$
	I_k:=\{\FF\in K^*: \, \theta_k(\{\FF\})>0\}
$$
and let $\theta'_k$ be the Radon measure on~$K$ defined by 
$$
	\theta'_k(\Omega):=\theta_k(\Omega \sm K^*)
	\quad
	\mbox{for every Borel set }\Omega \sub K.
$$

We claim that $\{\theta'_k\}_{k\in \omega}$ is $w^*$-convergent to~$\mu$. Indeed, the set
$D:=\overline{\bigcup_{k\in\omega} I_k}$ satisfies $\mu(D)=0$ (by Claim~V) and so 
we can find $\{B_i\}_{i\in \omega}\in \NN(\alg)$ such that $D \sub \bigcap_{i\in \omega}\widehat{B_i}$.
Now, fix $A\in \alg$. We have
$$
	\big|\theta'_k(\widehat{A})-\theta_k(\widehat{A})\big|=
	\theta_k(\widehat{A} \cap K^*)=
	\theta_k(\widehat{A} \cap D) \leq
	\theta_k(\widehat{B_{i}})
	\quad\mbox{for every }i,k\in \omega.
$$
Bearing in mind that 
$$
	\lim_k \theta_k(\widehat{A})=\mu(\widehat{A}),
	\quad 
	\lim_k \theta_k(\widehat{B_i})=\mu(\widehat{B_i}) 
	\quad\mbox{and}
	\quad
	\lim_i \mu(\widehat{B_i})=0,
$$
we get $\lim_k \theta'_k(\widehat{A})= \mu(\widehat{A})$. As $A\in \alg$ is arbitrary,
$\{\theta'_k\}_{k\in \omega}$ is $w^*$-convergent to~$\mu$.

On the other hand, each $\theta'_k$ is a linear combination (with non-negative coefficients)
of finitely many elements of $\Delta_{K\sm K^*}=\{\delta_{\FF_x}:x\in X\}$, 
where $\FF_x=\{A\in \alg:x\in A\}$ (see~II). Hence $\theta'_k$ comes from a
non-negative finitely supported measure on~$\mathcal{P}(X)$ and so
there is $\xi < \omega_1$ such that $\theta'_k(\widehat{A})=\nu^\xi_k(A)$
for every $A\in \alg$ and $k\in \omega$. By the construction (see~I) there is some $A\in \alg$ such that
$\{\nu^\xi_k(A)\}_{k\in \omega}$ does not converge to~$\mu(A)$, thus contradicting
the fact that $\{\theta'_k\}_{k\in \omega}$ is $w^*$-convergent to~$\mu$. 

\smallskip

{\sc VII. Claim.} {\em The measure $\mu$ belongs to~$Seq^2({\rm co}\Delta_K)$.}

Indeed, the sequence $\{\mu_n\}_{n\in \omega}$ is $w^*$-convergent to~$\mu$ as we pointed out in~II. 
On the other hand, every $\mu_n$ is concentrated on 
the closed metrizable set $\widehat{\{n\}\times 2^\omega}$ 
(it is not difficult to check that it is homeomorphic to~$2^\omega$), hence
$\mu_n$ has a uniformly distributed sequence and so 
$\mu_n\in Seq^1({\rm co}\Delta_K)$.

\smallskip

{\sc VIII. Claim.} {\em The equality $Seq^3({\rm co}\Delta_K) = P(K)$ holds.}

Let $\nu \in P(K)$. We can write
$\nu=\nu_1+\nu_2+\nu_3$ where $\nu_i\in M^+(K)$ satisfy:
\begin{itemize}
\item $\nu_1(\Omega):=\nu(\Omega \sm K^*)$ for every Borel set $\Omega \sub K$;
\item $\nu_2$ is absolutely continuous with respect to $\mu$;
\item $\nu_3$ is concentrated on a Borel set~$B \sub K^*$ with~$\mu(B)=0$.
\end{itemize}

For every $n\in \omega$ we define $\theta_n \in M^+(K)$ by 
$$
	\theta_n(\Omega):=\nu\big(\Omega \cap \widehat{n \times 2^\omega}\big)
		\quad
	\mbox{for every Borel set }\Omega \sub K.
$$
Then $\theta_n\in Seq^1(M^+(K)\cap {\rm span}\Delta_K)$, because
$\theta_n$ is concentrated on the closed metrizable set $\widehat{n \times 2^\omega}=\bigcup_{k<n}\widehat{\{k\}\times 2^\omega}$.
Since the sequence $\{\theta_n\}_{n\in \omega}$ is $w^*$-convergent to~$\nu_1$, we conclude that
$\nu_1 \in Seq^2(M^+(K)\cap {\rm span}\Delta_K)$.

On the other hand, since $\nu_2$ is absolutely continuous with respect to~$\mu\in Seq^2({\rm co}\Delta_K)$ (see 
Claim~VII), we have $\nu_2\in Seq^3(M^+(K)\cap\lin\Delta_K)$ by Lemma~\ref{seq_abs_cont}.

Concerning~$\nu_3$, note that (by the regularity of~$\nu_3$) we can assume that $B$ is of the form $B=\bigcup_{j<\omega} F_j$ for some closed
sets $F_j\sub K^*$. Now, for every $j\in \omega$ we can find (using the regularity of~$\mu$) a closed $\mathcal{G}_\delta$ set 
$H_j \sub K^*$ such that $F_j \sub H_j$ and $\mu(H_j)=0$. 
From Claim IV it follows that $\overline{B}$ is metrizable and so $\nu_3\in Seq^1(M^+(K)\cap\lin\Delta_K)$.

Therefore, $\nu=\nu_1+\nu_2+\nu_3 \in Seq^3(M^+(K)\cap\lin\Delta_K)$. Since
$\nu$ is a probability measure, it is not difficult to prove that
$\nu \in Seq^3({\rm co}\Delta_K)$ as well.
This completes the proof of Theorem~\ref{ex1:9}.
\end{proof}

\section{The cases of $\beta\omega$ and $\beta\omega \sm \omega$}\label{section:betaN}

Let $K$ be a compact space. It is known (cf. \cite[Proposition~3.6]{rod-ver}) that
for every $\Ba(C_p(K))$-measurable $\mu \in P(K)$ there is a 
closed separable set $F\sub K$ such that $\mu(F)=1$. Thus, 
if the equality 
$$
	\Ba(C_p(K))=\Ba(C_w(K))
$$ 
holds true then every element of~$P(K)$
is concentrated on some closed separable subset of~$K$. In this section we make clear
that the converse statement fails in general, since
$\Ba(C_p(\beta\omega))\neq \Ba(C_w(\beta\omega))$ (Theorem~\ref{neq_betaN}).
This will be a consequence of the construction given in Theorem~\ref{measure_on_betaN} below.

Recall that the {\em asymptotic density} of a set $A\sub\omega$ is defined as
$$
	d(A):=\lim_{n}\frac{|A\cap n|}{n}
$$
whenever the limit exists. We shall write $\DD$ for the family of those
$A\sub\omega$ for which $d(A)$ is defined.
The following lemma is well-known.

\begin{lemma}\label{approximation_property}
If $\{A_n\}_{n\in \omega}$ is an increasing sequence in $\DD$, then there is $B\in\DD$ such that
$A_n\sm B$ is finite for every $n\in \omega$ and $d(B)=\lim_n d(A_n)$.
\end{lemma}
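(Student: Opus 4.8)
The plan is a diagonalisation over rapidly growing blocks of~$\omega$. Since $\{A_n\}_{n\in\omega}$ is increasing, the sequence $\{d(A_n)\}_{n\in\omega}$ is nondecreasing and bounded above by~$1$, so $d:=\lim_n d(A_n)$ exists. I would build recursively a strictly increasing sequence $0=m_0<m_1<m_2<\cdots$ in~$\omega$ such that, for every $j\in\omega$, we have $m_{j+1}>(j+2)m_j$ and $\bigl|\,|A_l\cap n|/n-d(A_l)\,\bigr|<1/(j+2)$ for all $n\ge m_{j+1}$ and all $l\le j+1$; this is possible because at each step only finitely many of the $A_l$ are involved, each having a well-defined density. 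Then I would set $B:=\bigcup_{i\in\omega}\bigl(A_i\cap[m_i,m_{i+1})\bigr)$.

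The requirement that $A_n\sm B$ be finite is immediate: for every $i\ge n$ we have $A_n\sub A_i$, hence $A_n\cap[m_i,m_{i+1})\sub B$, so $A_n\sm B\sub[0,m_n)$.

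For the density of~$B$, the key intermediate estimate is that $|B\cap m_j|/m_j\to d$ as $j\to\infty$. This follows because $A_i\sub A_{j-1}$ for all $i<j$ gives $B\cap m_j\sub A_{j-1}\cap m_j$, while $B\cap[m_{j-1},m_j)=A_{j-1}\cap[m_{j-1},m_j)$ gives the lower bound $|B\cap m_j|\ge|A_{j-1}\cap m_j|-|A_{j-1}\cap m_{j-1}|$; feeding in the defining estimates for $m_{j-1}$ and $m_j$ and using $m_{j-1}<m_j/(j+1)$ sandwiches $|B\cap m_j|/m_j$ between two quantities converging to~$d$. Finally, for an arbitrary $n$ with $m_j\le n<m_{j+1}$, I would write $|B\cap n|=|B\cap m_j|+\bigl(|A_j\cap n|-|A_j\cap m_j|\bigr)$ and use the estimate attached to~$m_j$ (valid for all $n\ge m_j$, with $l=j$) to control the last two terms up to relative error $1/(j+1)$; together with $|B\cap m_j|=d\,m_j+o(m_j)$, the bound $m_j\le n$, and $d(A_j)\to d$ this yields $|B\cap n|/n\to d$ as $n\to\infty$. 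Hence $B\in\DD$ and $d(B)=d=\lim_n d(A_n)$.

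I expect the delicate point to be the case where $n$ lies near the left endpoint~$m_j$ of its block: there the contribution $|A_j\cap[m_j,n)|$ is negligible compared with~$n$, so it cannot on its own pin down the density, and one really needs the auxiliary fact $|B\cap m_j|\approx d\,m_j$. This in turn is exactly what the fast growth condition $m_{j+1}>(j+2)m_j$ (which makes the block $[m_{j-1},m_j)$ dominate $[0,m_j)$) together with $d(A_{j-1})\to d$ buys us.
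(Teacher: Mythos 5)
The paper offers no proof of this lemma at all---it is simply cited as ``well-known''---so there is nothing to compare your argument against; I have checked it on its own merits and it is correct. Your block construction $B=\bigcup_{i}\bigl(A_i\cap[m_i,m_{i+1})\bigr)$ with the two recursive requirements (fast growth $m_{j+1}>(j+2)m_j$ and uniform density approximation for $A_0,\dots,A_{j+1}$ beyond $m_{j+1}$) is the standard diagonalisation, and all the estimates go through: the inclusion $B\cap[0,m_j)\sub A_{j-1}\cap[0,m_j)$ gives the upper bound, the identity $B\cap[m_{j-1},m_j)=A_{j-1}\cap[m_{j-1},m_j)$ together with $m_{j-1}<m_j/(j+1)$ gives the lower bound, and for $m_j\le n<m_{j+1}$ the quantity $|B\cap n|/n$ becomes a convex combination of $d$ and $d(A_j)$ up to errors $O(1/(j+1))+o(1)$, which tends to $d$. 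The delicate point you flag (when $n$ sits just above $m_j$) is indeed the only place the fast-growth condition is used, and your treatment of it is sound.
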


\begin{theo}\label{measure_on_betaN} 
There is $\nu \in P(\beta\omega)$ such that: 
\begin{enumerate}
\item[(i)] $\nu$ is of countable type, i.e. $L^1(\nu)$ is separable;
\item[(ii)] $\nu(\beta\omega \sm \omega)=1$;
\item[(iii)] $\nu(F)=0$ for every closed separable set $F\sub \beta\omega\sm \omega$.
\end{enumerate}
\end{theo}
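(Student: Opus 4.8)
The plan is to obtain $\nu$ as an extension of the asymptotic density $d$. Recall from the terminology section that a Radon measure on $\beta\omega=\ult(\PP(\omega))$ is the same thing as a bounded finitely additive measure on $\PP(\omega)$ (via $\mu(\widehat A)=\mu(A)$); under this correspondence the equality $\mu(\beta\omega\sm\omega)=1$ is equivalent to $\mu(\{n\})=0$ for every $n\in\omega$ (use countable additivity of the Radon measure on the clopen partition $\omega=\bigcup_n\{n\}$), and a clopen set $\widehat A$ contains a closed separable set $F=\overline{\{p_j:j\in\omega\}}\sub\beta\omega\sm\omega$ precisely when $A\in p_j$ for all $j$.

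The construction has two steps. First I would build a subalgebra $\cA$ of $\PP(\omega)$ with $\cA\sub\DD$ such that: (a) $\cA$ contains all arithmetic progressions and all finite subsets of $\omega$; (b) $\cA$ is closed under the operation of Lemma~\ref{approximation_property}, that is, for every increasing sequence $\{E_k\}_{k\in\omega}$ in $\cA$ there is $B\in\cA$ with $E_k\sm B$ finite for all $k$ and $d(B)=\lim_k d(E_k)$; and (c) the measure algebra of $(\cA,d)$ is separable. Such an $\cA$ is produced by a transfinite recursion of length $\omega_1$, starting from the countable algebra generated by the arithmetic progressions and the finite sets and, at each stage, adjoining a set $B$ as in Lemma~\ref{approximation_property} for every increasing sequence already present and then closing under finite Boolean operations. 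The recursion can be kept inside $\DD$ (which is not itself an algebra) because such a $B$ can be produced by a diagonal construction refining the standard proof of Lemma~\ref{approximation_property} so that it is ``jointly distributed'' (i.e. $d(B\cap A)$ is defined) with any prescribed countable $d$-dense subalgebra of the stage reached so far, hence with that whole stage; this makes $d$ remain finitely additive after $B$ is adjoined. Property (c) is preserved throughout because, modulo a null set, the class of each adjoined $B$ equals the supremum in the measure algebra of the (increasing) classes of the $E_k$, so a routine induction shows that the measure algebra of $(\cA,d)$ is $\sigma$-generated by the initial countable algebra and hence has countable Maharam type. This transfinite construction is the technical heart of the proof.

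Next I would apply Lemma~\ref{extensions_of_measures} to the probability measure $d$ on $\cA$ to get a finitely additive probability $\nu$ on $\PP(\omega)$ with $\inf\{\nu(C\btu D):D\in\cA\}=0$ for every $C\sub\omega$, and regard $\nu$ as a Radon probability on $\beta\omega$. Property (i) then holds: by that approximation property $\{\widehat D:D\in\cA\}$ is dense in the measure algebra of $\nu$, which is therefore the metric completion of $(\cA,d)$ and hence separable by (c), so $L^{1}(\nu)$ is separable. Property (ii) holds because $\nu(\{n\})=d(\{n\})=0$ for every $n$, whence $\nu(\omega)=0$ and $\nu(\beta\omega\sm\omega)=1$.

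To check (iii), let $F\sub\beta\omega\sm\omega$ be closed and separable; write $F=\overline{\{p_j:j\in\omega\}}$ with each $p_j$ a free ultrafilter, and fix $\eps>0$. For each $j$ pick $N_j$ with $1/N_j<\eps/2^{j+1}$ and let $D^j\in\cA$ be the unique residue class modulo $N_j$ that belongs to $p_j$; then $d(D^j)<\eps/2^{j+1}$, so $E_k:=\bigcup_{j\le k}D^j$ lies in $\cA$, is increasing, and satisfies $d(E_k)<\eps$. By (b) choose $B\in\cA$ with $E_k\sm B$ finite for all $k$ and $d(B)\le\eps$. For each $j$ the set $D^j\sm B\sub E_j\sm B$ is finite, so $D^j\cap B\in p_j$ (as $p_j$ is free) and hence $B\in p_j$; therefore $F\sub\widehat B$ and $\nu(F)\le\nu(\widehat B)=d(B)\le\eps$. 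Since $\eps>0$ was arbitrary, $\nu(F)=0$. The only delicate point in the whole argument is the first step, reconciling (a), (b) and (c); once $\cA$ is in hand, parts (i), (ii) and (iii) are short.
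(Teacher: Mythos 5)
Your overall strategy coincides with the paper's --- extend a density-type finitely additive measure from a carefully built subalgebra of $\PP(\omega)$ to all of $\PP(\omega)$ via Lemma~\ref{extensions_of_measures}, keep a countable algebra dense so that the type stays countable, and kill separable closed subsets of $\beta\omega\sm\omega$ by capturing increasing sequences of small-density sets as in Lemma~\ref{approximation_property} --- but your implementation of the capturing step is genuinely different and much heavier than it needs to be. The paper performs only \emph{one} round of adjunctions: starting from a countable algebra $\alg\sub\DD$ (obtained there from a uniformly distributed sequence in $2^\omega$; your algebra of residue classes and finite sets would serve equally well), it adds one set $B_S$ for each increasing sequence $S$ in $\alg$, and then defines the measure on the generated algebra $\algb$ by $\mu(B)=\lim_{n\to\UU}|B\cap n|/n$ for a fixed free ultrafilter $\UU$. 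That ultrafilter-limit device is the key simplification: $\mu$ is finitely additive on all of $\PP(\omega)$ at once, so there is no need for $\algb$ to stay inside $\DD$, no need for the adjoined sets to be ``jointly distributed'' with anything, and no transfinite closure; one only checks that $\alg$ remains $\mu$-dense in $\algb$, which follows from $\mu(B_S\btu S_n)=d(B_S)-d(S_n)\to 0$. By contrast, your $\omega_1$-recursion aiming at full closure under the capturing operation is exactly the part you flag as the technical heart, and it is where the real unfinished work sits: at each stage you must adjoin continuum many capturing sets simultaneously while keeping the entire generated algebra inside $\DD$ with $d$ finitely additive, which needs a strengthened ``jointly distributed'' version of Lemma~\ref{approximation_property} together with an inner recursion making the new sets jointly distributed with one another --- all asserted but not carried out. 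I believe the sketch can be completed, but it is also unnecessary: in your own verification of (iii) the increasing sequence $\{E_k\}$ lies in the initial countable algebra, so capturing sequences from that single countable algebra (one round, as in the paper) already suffices. Your verifications of (i), (ii) and (iii) are then essentially those of the paper, with residue classes playing the role of the sets $\varphi(E)$.
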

\begin{proof}
Let $\{t_n\}_{n\in \omega}$ be a uniformly distributed sequence for the usual product probability
measure $\lambda$ on~$2^\omega$. For every $E\in \clop(2^\omega)$ we define
$\varphi(E):=\{i\in \omega: t_i\in E\}$, so that 
$$
	\lim_{n}\dfrac{|\varphi(E) \cap n|}{n}=
	\lim_{n}\frac{1}{n}\sum_{i<n}1_{E}(t_i)=
	\lambda(E),
$$
hence $\varphi(E)$ belongs to~$\DD$ and $d(\varphi(E))=\lambda(E)$. It is easy
to check that 
$$
	\alg:=\{\varphi(E):E\in \clop(2^\omega)\} \sub \DD
$$ 
is a (countable) algebra. Let $IS(\alg)$ be the family of all increasing sequences in~$\alg$. 
By Lemma \ref{approximation_property}, for every
$S=\{S_n\}_{n\in \omega}\in IS(\alg)$ we can find $B_S\in \DD$ such that
$S_n\sm B_S$ is finite for every $n\in \omega$ and $d(B_S)=\lim_n d(S_n)$. 

Let $\algb \sub \mathcal{P}(\omega)$ be the algebra generated by~$\alg \cup \{B_S: S\in IS(\alg)\}$.
Fix any free ultrafilter $\mathcal{U}$ on~$\omega$ and
define a probability measure $\mu$ on~$\algb$ by
$$
	\mu(B) := \lim_{n\to\,\mathcal{U}}\frac{|B\cap n|}{n},
$$
so that $\mu(B)=d(B)$ whenever $B\in\algb \cap \DD$. Observe that
the family 
$$
	\algb_0:=\big\{ B\in\algb: \, \inf\{\mu(B\btu A):A\in\alg\}=0 \big\}
$$
is an algebra containing~$\alg$. We claim that 
$B_S\in \algb_0$ for every $S=\{S_n\}_{n\in \omega}\in IS(\alg)$. Indeed, fix $n\in \omega$ and observe that, since $S_n \sm B_S$ is finite
and $S_n\in \DD$, we have $S_n\cap B_S \in \DD$ and $d(S_n\cap B_S)=d(S_n)$.
Now, since $B_S\in \DD$ we also have $B_S \sm S_n\in \DD$ and
$$
	d(B_S \sm S_n)=d(B_S)-d(S_n\cap B_S)=d(B_S)-d(S_n),
$$
hence $\mu(B_S\btu S_n)=
	\mu(S_n \sm B_S)+\mu(B_S \sm S_n)=
	d(B_S)-d(S_n)$.
Bearing in mind that $d(B_S)=\lim_n d(S_n)$, we conclude that
$B_S \in \algb_0$, as required. 

It follows that $\algb_0=\algb$. Using Lemma~\ref{extensions_of_measures}, 
we extend $\mu$ to a probability measure $\nu$ on~$\PP(\omega)$ so that 
$\inf\{\nu(C\btu A):A\in\alg\}=0$ for every $C\sub \omega$. Observe that
$\nu$ (seen as a Radon measure on~$\beta\omega$) has countable type (because $\alg$ is countable).

In order to check that $\nu$ is concentrated on~$\beta\omega \sm \omega$, fix $n\in \omega$ and take any $\epsilon>0$.
Choose a partition $2^\omega=\bigcup_{i=1}^p E_i$ such that each $E_i\in \clop(2^\omega)$ and $\lambda(E_i)\leq \epsilon$.
Then $\omega=\bigcup_{i=1}^p \varphi(E_i)$, each $\varphi(E_i)\in \alg$ and $\nu(\varphi(E_i))=\mu(\varphi(E_i))=d(\varphi(E_i))=\lambda(E_i) \leq \epsilon$.
Since $n\in \varphi(E_i)$ for some~$i$, we have $\nu(\{n\}) \leq \nu (\varphi(E_i)) \leq \epsilon$. As $\epsilon>0$ is arbitrary, 
we get $\nu(\{n\})=0$. It follows that $\nu(\beta\omega\sm \omega)=1$. 

Finally, take any closed separable set $F\sub \beta\omega\sm \omega$ and
let $\{\FF_n\}_{n\in \omega}$ be a dense sequence in~$F$. Fix $\epsilon>0$. 
As in the previous paragraph, for every $k\in \omega$ we can find a partition of~$\omega$ into finitely
many elements of~$\alg$ having asymptotic density 
less than~$\epsilon/2^{k+1}$; one of those elements, say~$T_k$,
belongs to~$\FF_k$. Set 
$$
	S_n:=\bigcup_{k\leq n}T_k
	\quad\mbox{for every }n\in \omega,
$$ 
so that $S=\{S_n\}_{n\in\omega}\in IS(\alg)$. We have $\FF_n\in \widehat{B_S}$ for every $n \in \omega$, because 
$S_n \sm B_S$ is finite and $S_n\in \FF_n \in \beta\omega \sm \omega$. Hence $F \sub \widehat{B_S}$. Since
$$
	d(S_n)\le \sum_{k\leq n} d(T_k) < \sum_{k\leq n} \frac{\epsilon}{2^{k+1}} < \eps
	\quad\mbox{for every }n\in \omega,
$$
it follows that $\nu(F)\leq \nu(\widehat{B_S})=\mu(B_S)=d(B_S)=\lim_{n}d(S_n) \leq \epsilon$. 
As $\epsilon>0$ is arbitrary, we get $\nu(F)=0$. The proof is over.
\end{proof}

Bearing in mind the comments at the beginning of this section,
Theorem~\ref{measure_on_betaN} gives immediately
the following:

\begin{cor}\label{neq_betaNsmN}
$\Ba(C_p(\beta\omega\sm \omega)) \neq \Ba(C_w(\beta\omega\sm \omega))$.
\end{cor}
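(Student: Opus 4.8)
The plan is to derive Corollary~\ref{neq_betaNsmN} from Theorem~\ref{measure_on_betaN} in exactly the way the paragraph just before the corollary suggests, so the work is almost entirely bookkeeping: I need to produce a Radon probability measure on $\beta\omega\sm\omega$ that is $\Ba(C_w(\beta\omega\sm\omega))$-measurable (in the relevant sense) but not concentrated on any closed separable subset. The measure $\nu$ from Theorem~\ref{measure_on_betaN} lives on $\beta\omega$ and is concentrated on $\beta\omega\sm\omega$, so its restriction $\nu' \in P(\beta\omega\sm\omega)$ is the natural candidate. By the dichotomy recalled at the start of Section~\ref{section:betaN} (based on \cite[Proposition~3.6]{rod-ver}), if $\Ba(C_p(\beta\omega\sm\omega)) = \Ba(C_w(\beta\omega\sm\omega))$ held, then every element of $P(\beta\omega\sm\omega)$ would have to be concentrated on a closed separable subset of $\beta\omega\sm\omega$. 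But $\nu'$ has the opposite property: by Theorem~\ref{measure_on_betaN}(iii), $\nu(F)=0$ for every closed separable $F\sub\beta\omega\sm\omega$, hence $\nu'(F)=0$ for every such $F$, while $\nu'(\beta\omega\sm\omega)=1$. This contradiction forces the inequality.

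First I would make the identification between $P(\beta\omega\sm\omega)$-measures and restrictions of measures on $\beta\omega$ concentrated off $\omega$ completely explicit: since $\beta\omega\sm\omega$ is a closed subspace of $\beta\omega$ and $\nu(\beta\omega\sm\omega)=1$, the formula $\nu'(\Omega):=\nu(\Omega)$ for Borel $\Omega\sub\beta\omega\sm\omega$ defines a Radon probability measure on $\beta\omega\sm\omega$, and for any closed $F\sub\beta\omega\sm\omega$ (which is then also closed in $\beta\omega$) we have $\nu'(F)=\nu(F)$. So Theorem~\ref{measure_on_betaN}(iii) transfers verbatim to $\nu'$. Then I would quote the consequence of \cite[Proposition~3.6]{rod-ver} exactly as stated at the beginning of the section: if $\Ba(C_p(K))=\Ba(C_w(K))$ then every $\mu\in P(K)$ is concentrated on some closed separable subset of $K$. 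Applying this with $K=\beta\omega\sm\omega$ and $\mu=\nu'$ yields a closed separable $F\sub\beta\omega\sm\omega$ with $\nu'(F)=1$, contradicting $\nu'(F)=0$.

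**The only point needing care** is the direction of the implication from \cite{rod-ver}: one must be sure that $\Ba$-coincidence implies the concentration property (not merely the converse), since the section's whole point is that the converse fails. The text states this implication explicitly before the corollary, so I would simply invoke it. Beyond that, there is essentially no obstacle — the corollary is a one-line deduction once Theorem~\ref{measure_on_betaN} is in hand, which is why the authors phrase it as following "immediately." A complete write-up is therefore just: take $\nu$ as in Theorem~\ref{measure_on_betaN}, pass to its restriction $\nu'$ on $\beta\omega\sm\omega$, observe $\nu'$ is not concentrated on any closed separable set by part~(iii), and conclude via the recalled consequence of \cite[Proposition~3.6]{rod-ver} that $\Ba(C_p(\beta\omega\sm\omega))\neq\Ba(C_w(\beta\omega\sm\omega))$.
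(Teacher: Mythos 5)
Your proposal is correct and is exactly the argument the paper intends: restrict $\nu$ from Theorem~\ref{measure_on_betaN} to $\beta\omega\sm\omega$ and apply the consequence of \cite[Proposition~3.6]{rod-ver} recalled at the start of Section~\ref{section:betaN}. The paper leaves this as an immediate deduction, and your write-up just makes the same bookkeeping explicit.
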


We arrive at the main result of this section.

\begin{theo}\label{neq_betaN}
$\Ba(C_p(\beta\omega)) \neq \Ba(C_w(\beta\omega))$.
\end{theo}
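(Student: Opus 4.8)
The plan is to exhibit, using the measure $\nu\in P(\beta\omega)$ produced by Theorem~\ref{measure_on_betaN}, a set lying in $\Ba(C_w(\beta\omega))$ but not in $\Ba(C_p(\beta\omega))$. Since the functional $\Phi_\nu\colon C(\beta\omega)\to\er$, $\Phi_\nu(f)=\int_{\beta\omega}f\,{\rm d}\nu$, is weakly continuous, every level set $\Phi_\nu^{-1}((-\infty,r])$ belongs to $\Ba(C_w(\beta\omega))$, so it is enough to prove that $\Phi_\nu$ is \emph{not} $\Ba(C_p(\beta\omega))$-measurable.

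First I would assume, towards a contradiction, that $\Phi_\nu$ is $\Ba(C_p(\beta\omega))$-measurable. Because $\Ba(C_p(\beta\omega))$ is generated by the evaluations $\pi_t\colon f\mapsto f(t)$ ($t\in\beta\omega$), there is a countable $D\sub\beta\omega$ with $\Phi_\nu$ being $\sigma(\pi_t\colon t\in D)$-measurable; hence $\Phi_\nu(f)=\Phi_\nu(g)$ whenever $f,g$ agree on $\overline D$, and an Urysohn argument together with the regularity of $\nu$ (the mechanism behind \cite[Proposition~3.6]{rod-ver}) gives $\nu(\overline D)=1$. Put $D_0:=D\cap\omega$ and $D_1:=D\sm\omega\sub\beta\omega\sm\omega$. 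Then $\overline{D_1}$ is a closed separable subset of $\beta\omega\sm\omega$, so $\nu(\overline{D_1})=0$ by Theorem~\ref{measure_on_betaN}(iii); since $\overline D=\overline{D_0}\cup\overline{D_1}$ and $\overline{D_0}=\widehat{D_0}$ is the clopen set of $\beta\omega$ determined by $D_0$, we get $\nu(\widehat{D_0})=1$, and then $\nu(\widehat{D_0}\sm\omega)=1$ because $\nu(\omega)=0$ by Theorem~\ref{measure_on_betaN}(ii); in particular $D_0$ is infinite.

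The heart of the proof — and the step I expect to be the main obstacle — is to upgrade this to the statement that $\nu$ is concentrated on a \emph{closed separable} subset of $\beta\omega\sm\omega$, which is impossible by Theorem~\ref{measure_on_betaN}(iii). The naive reduction does not work: one cannot simply discard the coordinates in $D_0$ and conclude that $\Phi_\nu$ is $\sigma(\pi_p\colon p\in D_1)$-measurable, exactly because $\nu(\overline{D_1})=0$; the measurability of $\Phi_\nu$ genuinely uses the coordinates in $D_0\sub\omega$, though — $\Phi_\nu$ being $c_0$-invariant, as $\nu(\omega)=0$ — only through their tail behaviour. Here I would use the \emph{countable type} of $\nu$, namely that by the construction of Theorem~\ref{measure_on_betaN} one has $\inf\{\nu(\Omega\btu\widehat A)\colon A\in\alg\}=0$ for every Borel $\Omega\sub\beta\omega$, where $\alg$ is the countable algebra of asymptotic-density sets $\varphi(E)$. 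The plan is to combine this with the (idea behind the) classical fact that a finitely additive measure on $\mathcal P(\omega)$ which is measurable with respect to the Borel (equivalently Baire-property) $\sigma$-algebra of $2^\omega$ and vanishes on singletons must be countably additive, thereby ruling out a non-trivial dependence of $\Phi_\nu$ on a tail event of the coordinates in $D_0$. This would force the measurability of $\Phi_\nu$ to be carried, up to the values at the countably many free ultrafilters of $D_1$ that lie in $\widehat{D_0}$, by the restriction onto the closed separable set $\overline{D_1\cap\widehat{D_0}}\sub\beta\omega\sm\omega$, so that $\nu$ is concentrated on that set — the contradiction sought. (Equivalently, one may transport $\nu$ through the homeomorphism $\beta\omega\cong\widehat{D_0}$ induced by the increasing enumeration of $D_0$ and reduce the residual analysis to the situation of Corollary~\ref{neq_betaNsmN}.) I expect the verification that countable type genuinely prevents positive mass from "hiding" in a tail event — rather than the rest of the scheme, which is routine — to be where the real work lies; this is also why the construction of Theorem~\ref{measure_on_betaN}, and not merely its stated properties (i)--(iii), must be invoked.
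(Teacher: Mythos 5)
Your opening moves coincide with the paper's: take the measure $\nu$ of Theorem~\ref{measure_on_betaN}, suppose $f\mapsto\int f\,{\rm d}\nu$ is $\Ba(C_p(\beta\omega))$-measurable, extract a countable set $D\sub\beta\omega$ of relevant evaluations, and use (ii) and (iii) to conclude $\nu(\overline{D\sm\omega})=0$. Up to there the argument is sound. But the step you yourself flag as ``where the real work lies'' is exactly the step that is missing, and the sketch you give for it would not close. The ``classical fact'' you invoke --- that a finitely additive measure on $\mathcal{P}(\omega)$ which is Borel measurable on $2^\omega$ and vanishes on singletons must be countably additive --- is not available in that form: if it were, property (i) of Theorem~\ref{measure_on_betaN} would be superfluous (and for universal measurability it is outright false under CH, since medial limits give universally measurable finitely additive extensions of density vanishing on finite sets). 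The tool the paper actually uses is the Fremlin--Talagrand theorem \cite[1J]{fre-tal}: a non-negative additive set function on $\mathcal{P}(A)$ which vanishes on finite sets, is not identically zero, and is Borel measurable on $2^A$, has \emph{uncountable Maharam type} as a Radon measure on $\beta A$. The contradiction is then obtained against property (i) (countable type), not against (iii).

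The second, more concrete gap is the measurability itself. You keep working with $D_0=D\cap\omega$ and propose to handle the free ultrafilters of $D\sm\omega$ lying in $\widehat{D_0}$ ``up to their values''. This is the obstruction, not a detail: for such an ultrafilter $\FF$ the map $B\mapsto 1_{\widehat{B}}(\FF)$, $B\sub D_0$, is a non-principal ultrafilter on $D_0$ viewed as a $\{0,1\}$-valued function on $2^{D_0}$, and such functions are not Borel measurable; so you cannot deduce that $B\mapsto\nu(\widehat{B})$ is Borel on $2^{D_0}$ from the $\sigma(\delta_t:t\in D)$-measurability of the integral. The paper's device is designed precisely to kill these evaluations: since $F:=\overline{D\sm\omega}$ is closed with $\nu(F)=0$, regularity yields $A\sub\omega$ with $\nu(\widehat{A})>0$ and $\widehat{A}\cap F=\emptyset$; for $B\sub A$ every evaluation $\delta_{\FF}(1_{\widehat{B}})$ with $\FF\in D\sm\omega$ then vanishes identically, the remaining evaluations (at points of $D\cap\omega$) depend continuously on $B\in 2^A$, hence $m(B):=\nu(\widehat{B})$ is Borel on $2^A$ and Fremlin--Talagrand applies. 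Your parenthetical reduction to Corollary~\ref{neq_betaNsmN} via $\beta\omega\cong\widehat{D_0}$ does not work either, because the evaluations at the points of $D_0$ itself do not disappear under that identification. In short: right measure, right first reductions, but the decisive step is absent and the replacement you propose for it rests on a misstated fact.
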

\begin{proof}
Let $\nu\in P(\beta\omega)$ be the measure of Theorem~\ref{measure_on_betaN}.
We shall prove that $\nu$ is not $\Ba(C_p(\beta\omega))$-measurable by contradiction. Suppose
$\nu$ is $\Ba(C_p(\beta\omega))$-measurable and fix a countable set $I\sub\beta\omega$ such that $\nu$ is measurable
with respect to the $\sigma$-algebra~$\Sigma$ on~$C(K)$ generated by $\{\delta_{\FF}: \FF\in I\}$. 
Set $F:=\overline{I\sm\omega} \sub \beta\omega \sm \omega$, so that
$\nu(F)=0$. Thus, there is $A \sub \omega$ with $\nu(A)>0$ such that 
$\widehat{A}\cap F=\emptyset$.

We can define a measure $m$ on~$\PP(A)$ by 
$m(B):=\nu(B)$ for every $B\sub A$. We claim that $m$ is Borel measurable as a function
on $\PP(A)$ (naturally identified with~$2^A$). Indeed, just observe that the function
$$
	\phi: \PP(A) \to C(K), \quad
	\phi(B):=1_{\widehat{B}},
$$
is Borel-$\Sigma$-measurable, because $\delta_{\FF} \circ \phi=0$
for every $\FF \in I \sm \omega$ (bear in mind that $\widehat{A}\cap (I\sm \omega)=\emptyset$).
Since in addition $m$ vanishes on finite sets and $m(A)>0$, an appeal to \cite[1J]{fre-tal} (cf. \cite[464Q]{freMT-4}) ensures that 
$m$ (seen as a Radon measure on~$\beta A$) has uncountable type, which contradicts
the fact that $\nu$ has countable type.
\end{proof}

\begin{remark}\label{remark:Ppoint}
There is a more direct construction of a Radon probability on $\beta\omega \sm \omega$ 
vanishing on all separable closed subsets which uses a $P$-point.
Recall that a free ultrafilter $\UU$ on~$\omega$ is called a {\em $P$-point} if for every sequence
$\{A_n\}_{n\in \omega}$ in~$\UU$ there is $A\in\UU$ such that $A_n\sm A$ is finite
for every $n\in \omega$. In this case, the measure defined on~$\mathcal{P}(\omega)$ by the formula
$$
	\mu(A) := \lim_{n\to\mathcal{U}}\frac{|A\cap n|}{n}
$$
satisfies the following property (AP) considered in~\cite{Mekler84}: for any increasing sequence $\{A_n\}_{n\in \omega}$ in $\mathcal{P}(\omega)$
there is $A \sub \omega$ such that $A_n\sm A$ is finite for every $n\in \omega$ and 
$\mu(A)=\lim_n\mu(A_n)$; see \cite{BFPR2001} or \cite{Mekler84} for details. 
Therefore, $\mu$ (seen as a Radon measure on $\beta\omega$)
is concentrated on~$\beta\omega \sm \omega$ and vanishes on each separable closed subset of $\beta\omega \sm \omega$
(this can be checked similarly as in the proof of Theorem~\ref{measure_on_betaN}).

However, while $P$-points do exist under Martin's axiom and in
many standard models of ZFC, consistently there are no $P$-points~\cite{Shelah82}. 
Moreover, consistently there are no measures on $\mathcal{P}(\omega)$ extending 
asymptotic density and having property (AP)~\cite{Mekler84}. 
\end{remark}

\section{Some open problems}

In this section we collect several open problems related to the topic of this paper.
Throughout this section $K$ is a compact space.

\begin{problem}
If $\mu\in P(K)$ is $\Ba(C_p(K))$-measurable, does $\mu\in Seq({\rm co}\Delta_K)$?
\end{problem}

\begin{problem}
Is $Seq({\rm co}\Delta_K) = P(K)$ equivalent to $\Ba(C_p(K)) = \Ba(C_w(K))$?
\end{problem}

\begin{problem}
Let $\mu\in Seq^{\alpha}({\rm co \Delta_K})$ for some $\alpha<\omega_1$
and let $\nu\in M(K)$ be absolutely continuous with respect to~$\mu$. Does $\nu\in Seq^{\alpha}(\lin\Delta_K)$?
\end{problem}

\begin{problem}
Does the space $K$ of Theorem~\ref{ex1:9} satisfy $P(K)=Seq^2({\rm co}\Delta_K)$?
\end{problem}

\begin{problem}\label{problem:G}
Do we have $\Ba(C_p(K))\neq \Ba(C_w(K))$ 
whenever $C(K)$ is Grothendieck (and $K$ is infinite)?
\end{problem}

Recall that $C(K)$ is called a {\em Grothendieck} space if every
$w^*$-convergent sequence in~$M(K)$ is necessarily weakly convergent (see e.g. \cite[p.~179]{die-uhl-J}). 
The spaces $C(\beta\omega)$ and $C(\beta\omega \sm \omega)$ are examples of Grothendieck spaces. 
Our motivation for Problem~\ref{problem:G} comes from the results in Section~\ref{section:betaN}
and the following fact:

\begin{prop}\label{Grothendiecknosequential}
If $K$ is infinite and $C(K)$ is a Grothendieck space, then
$$
	Seq({\rm co}\Delta_K) \neq P(K).
$$
\end{prop}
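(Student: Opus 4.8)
The plan is to show that if $Seq({\rm co}\Delta_K) = P(K)$, then $C(K)$ cannot be Grothendieck (for infinite $K$). The starting point is the observation already recorded in the introduction: $\Ba(C_p(K))$ is generated by $\Delta_K$ and $\Ba(C_w(K))$ is generated by $P(K)$, and $Seq({\rm co}\Delta_K) = P(K)$ forces the equality $\Ba(C_p(K)) = \Ba(C_w(K))$, since $\Delta_K$ already generates all of $P(K)$ as a $\sigma$-algebra once $P(K)$ is the sequential closure of ${\rm co}\Delta_K$. So it suffices to produce, for infinite $K$ with $C(K)$ Grothendieck, an element of $\Ba(C_w(K)) \setminus \Ba(C_p(K))$; equivalently, by the result quoted at the start of Section~\ref{section:betaN} (\cite[Proposition~3.6]{rod-ver}), it suffices to exhibit some $\mu \in P(K)$ that is \emph{not} concentrated on any closed separable subset of $K$.

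First I would note that an infinite compact space $K$ carries a copy of $\beta\omega$ in its space of measures, in the following weak sense: pick a countable discrete set $\{x_n : n\in\omega\}$ in $K$ with all $x_n$ distinct, and consider the map $\omega \to K$, $n \mapsto x_n$. Since $C(K)$ is Grothendieck and $K$ is infinite, $C(K)$ contains no complemented copy of $c_0$, and in fact the classical theory (Grothendieck's theorem on $C(K)$-spaces, cf.\ \cite[p.~179]{die-uhl-J}) tells us more: every bounded sequence in $C(K)$ has a weak-Cauchy subsequence or else $C(K)$ contains $\ell^\infty$. The key structural input I want is that the closure $L := \overline{\{x_n\}}$ in $K$ of a suitable countable discrete subset gives a continuous surjection onto $\beta\omega$ — or, more robustly, that there is a measure on $K$ supported ``at infinity'' on such a set. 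Concretely: the inclusion $\omega \hookrightarrow K$ via $n\mapsto x_n$ extends to a continuous map $\beta\omega \to K$; let $L$ be its image. Then $C(L)$ is a quotient of $C(K)$, hence is Grothendieck as well, and $L$ is an infinite compact space which is a continuous image of $\beta\omega$.

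Next I would transport the measure constructed in Theorem~\ref{measure_on_betaN} (or the simpler $P$-point construction of Remark~\ref{remark:Ppoint} when available, but we want a ZFC argument) through this continuous surjection $q : \beta\omega \to L$. However, a pushforward need not stay singular with respect to separable sets, so instead I would argue directly: because $C(L)$ is Grothendieck and $L$ is a continuous image of $\beta\omega$, $L$ is \emph{not} sequentially compact — indeed $\beta\omega$ is not, and a continuous image... (this needs care; the cleaner route is to use that a Grothendieck $C(L)$ forces $L$ to have no nontrivial convergent sequences, so $L$ contains a copy of $\beta\omega$ by the standard characterization, or at least $L$ fails to be ``angelic''). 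Granting a copy of $\beta\omega$ sitting inside $K$ as a closed subspace $L$, I restrict attention to $L$: extend the measure $\nu$ of Theorem~\ref{measure_on_betaN} (living on $\beta\omega$, concentrated on $\beta\omega\setminus\omega$, vanishing on all closed separable subsets of $\beta\omega\setminus\omega$) to a Radon measure $\mu$ on $K$ via the homeomorphism $\beta\omega \cong L$ and then trivially to $K$. A closed separable $F \subseteq K$ meets $L$ in a closed separable subset of $L \cong \beta\omega$; since $\mu$ is carried by $L\setminus\omega$ and vanishes on closed separable subsets of $\beta\omega\setminus\omega$, we get $\mu(F) = 0$. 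Hence $\mu$ is concentrated on no closed separable set, so $\mu \in \Ba(C_w(K))\setminus\Ba(C_p(K))$, giving $\Ba(C_p(K))\neq\Ba(C_w(K))$ and therefore $Seq({\rm co}\Delta_K)\neq P(K)$.

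The main obstacle is the middle step: showing that an infinite compact $K$ with $C(K)$ Grothendieck must contain a closed copy of $\beta\omega$ (or at least a closed subspace onto which one can push a ``density-type'' measure singular against separable sets). The relevant fact is that $C(K)$ Grothendieck implies $K$ has no nontrivial convergent sequences (otherwise $c_0$ would be complemented in $C(K)$, contradicting the Grothendieck property), and a standard argument then embeds $\beta\omega$ as a closed subspace: take a countable discrete $\{x_n\}$, its closure $L$ is then homeomorphic to $\beta\omega$ precisely because the lack of convergent sequences makes the evaluation $\beta\omega \to L$ a homeomorphism. I would make this step rigorous by invoking the classical result that a compact space in which every countable subset has compact metrizable closure ... fails here; the cleanest citation is that $C(K)$ Grothendieck $\Rightarrow$ $K$ is an $F$-space-like phenomenon forcing $\beta\omega \hookrightarrow K$ closedly. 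Once that embedding is in hand, the rest is bookkeeping on top of Theorem~\ref{measure_on_betaN} and the comments opening Section~\ref{section:betaN}.
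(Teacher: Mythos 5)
There is a genuine gap, and in fact two. First, the intermediate statement you reduce the proposition to --- that $\Ba(C_p(K))\neq \Ba(C_w(K))$ for every infinite $K$ with $C(K)$ Grothendieck --- is precisely Problem~\ref{problem:G} of this paper, which the authors state as \emph{open}; the proposition is meant as motivation for that problem, not a consequence of it, so your reduction points in the wrong direction. Second, the structural claim your construction hinges on --- that an infinite compact $K$ with $C(K)$ Grothendieck must contain a closed copy of $\beta\omega$ --- is false. If $L\sub K$ is closed and homeomorphic to $\beta\omega$, then the restriction $C(K)\to C(L)\cong \ell^\infty$ is onto and, by the injectivity of $\ell^\infty$, splits, so $\ell^\infty$ embeds complementably in $C(K)$; but Haydon constructed Grothendieck spaces of the form $C(K)$ that do not contain $\ell^\infty$ at all. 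Relatedly, the absence of nontrivial convergent sequences does \emph{not} make the closure of a countable discrete set a copy of $\beta\omega$ (that would require disjoint subsets of the sequence to have disjoint closures, an $F$-space-type property that Grothendieck does not give you). Your own hedges (``this needs care'', ``fails here'') flag exactly this hole, and it cannot be patched by a citation.

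The paper's argument is much more elementary and uses the Grothendieck property in a completely different way: if $\{\mu_n\}_{n\in\omega}$ is a $w^*$-convergent sequence of probabilities each concentrated on a countable set $C_n$, then Grothendieck upgrades $w^*$-convergence to weak convergence in $M(K)$, and testing against the Borel set $K\sm\bigcup_k C_k$ shows the limit is again concentrated on a countable set. By transfinite induction every element of $Seq({\rm co}\Delta_K)$ is concentrated on a countable set. On the other hand, Grothendieck rules out a complemented copy of $c_0$, so $K$ is not scattered, and a non-scattered compact space carries a Radon probability that is not concentrated on any countable set. Hence $Seq({\rm co}\Delta_K)\neq P(K)$. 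Note that this proof never mentions $\Ba(C_p(K))$, $\Ba(C_w(K))$, closed separable supports, or $\beta\omega$.
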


\begin{proof} We first claim that every element of $Seq({\rm co}\Delta_K)$ is concentrated on a countable subset of~$K$. Indeed,
let $\{\mu_n\}_{n\in \omega}$ be any $w^\ast$-convergent sequence in~$P(K)$, where 
each $\mu_n$ is concentrated on a countable set $C_n\sub K$, and 
write $\mu \in P(K)$ to denote its limit.
Since $C(K)$ is Grothendieck, the sequence $\{\mu_n\}_{n\in \omega}$ converges to~$\mu$ weakly in~$M(K)$ and so
$$
	\mu\Big(K\setminus \bigcup_{k\in \omega} C_k\Big) = \lim_n \mu_n\Big(K\setminus \bigcup_{k\in\omega} C_k\Big) = 0,
$$ 
therefore $\mu$ is concentrated on a countable set. This proves the claim.

Since $C(K)$ is Grothendieck, it has no complemented copy of $c_0$ (cf. \cite[p.~179]{die-uhl-J}), 
hence $K$ is not scattered (see e.g. \cite[Theorem~12.30]{fab-alt-J}) and so
there are elements of~$P(K)$ which are not concentrated on a countable subset of~$K$, 
\cite[Theorem 19.7.6]{semadeni}.
It follows that $Seq({\rm co}\Delta_K) \neq P(K)$. 
\end{proof}

\end{document}